\newcommand{\calA}{\mathcal{A}}
\newcommand{\calF}{\mathcal{F}}
\newcommand{\calR}{\mathcal{R}}
\newcommand{\ZZ}{\mathbb{Z}}
\newcommand{\kk}{\Bbbk}
\newcommand{\ab}{\mathbf{a}}
\newcommand{\bb}{\mathbf{b}}
\newcommand{\cb}{\mathbf{c}}
\newcommand{\wb}{\mathbf{w}}
\newcommand{\xb}{\mathbf{x}}
\newcommand{\Hom}{\operatorname{Hom}}
\newcommand{\Gro}{Gr\"{o}bner }
\newcommand{\Pluecker}{Pl\"{u}cker }
\newcommand{\conv}{\mathrm{conv}}
\newcommand{\cone}{\mathrm{cone}}
\newcommand{\GL}{\mathrm{GL}}
\newcommand{\Stab}{\mathrm{Stab}}
\newcommand{\intnonneg}{\mathbb{Z}_{\geq 0}}
\def\opn#1#2{\def#1{\operatorname{#2}}} 
\opn\Cl{Cl} \opn\conv{conv} \opn\deg{deg} \opn\rank{rank} \opn\Spec{Spec} \opn\Stab{Stab} \opn\aff{aff} \opn\div{div} \opn\GL{GL}
\opn\cone{cone} \opn\End{End} \opn\Hom{Hom} \opn\mod{mod} \opn\gldim{gldim} \opn\pdim{pdim} \opn\diag{diag} \opn\vert{vert}
\opn\Block{Block} \opn\Pyr{Pyr} \opn\max{max} \opn\min{min} \opn\ini{in} \opn\rev{rev} \opn\ker{ker} \opn\lat{lat} \opn\pull{pull} \opn\rev{rev}
\opn\Gale{Gale} \opn\sign{sign} \opn\supp{supp}
\opn\cok{coker} \opn\core{core} \opn\star{star}
\opn\pd{pd} \opn\Soc{Soc} \opn\Ap{Ap} \opn\Sym{Sym}
\opn\PF{PF} \opn\t{t} \opn\F{F} \opn\e{e}
\opn\m{m} \opn\G{G} \opn\g{g} \opn\H{H}
\opn \embdim{embdim} \opn\var{var}
\opn{\mult}{mult} \opn{\emb}{emb}
\opn{\gap}{Gap}
\opn{\gr}{gr}
\newtheorem{thm}{Theorem}[section]
\newtheorem{theorem}[thm]{Theorem}
\newtheorem{lemma}[thm]{Lemma}
\newtheorem{proposition}[thm]{Proposition}
\theoremstyle{definition}
\newtheorem{definition}[thm]{Definition}
\newtheorem{example}[thm]{Example}
\theoremstyle{remark}
\newtheorem{remark}[thm]{Remark}
\title{Khovanskii bases of subalgebras arising from finite distributive lattices}
\author{Akihiro Higashitani}
\address{Department of Pure and Applied Mathematics, Graduate School of Information Science and Technology, Osaka University, Osaka, Japan}
\email{higashitani@ist.osaka-u.ac.jp}
\author{Koji Matsushita}
\address{Department of Pure and Applied Mathematics, Graduate School of Information Science and Technology, Osaka University, Osaka, Japan}
\email{k-matsushita@ist.osaka-u.ac.jp}
\author[K. Tani]{Koichiro Tani}
\address{Department of Pure and Applied Mathematics, Graduate School of Information
Science and Technology, Osaka University, Osaka, Japan}
\email{tani-k@ist.osaka-u.ac.jp}
\date{}
\subjclass[2020]{
Primary: 13F65;
Secondary; 05E40, 13P10, 14M25, 14M15. 
}
\keywords{Khovanskii basis, SAGBI basis, Distributive lattice, Hibi ideal, Generalized snake poset, $(2+2)$-free poset}
\begin{document}
\begin{abstract}
  The notion of Khovanskii bases was introduced by Kaveh and Manon~\cite{KhovanskiiBasis}.
  It is a generalization of the notion of SAGBI bases for a subalgebra of polynomials.
  The notion of SAGBI bases was introduced by Robbiano and Sweedler~\cite{RobbianoSubalgebra}
  as an analogue of \Gro bases in the context of subalgebras.
  A Hibi ideal is an ideal of a polynomial ring that arises from a distributive lattice.
  For the development of an analogy of the theory of Hibi ideals and \Gro bases within the framework of subalgebras, 
  in this paper, we investigate when the set of the polynomials associated with a distributive lattice forms
  a Khovanskii basis of the subalgebras it generates.
  We characterize such distributive lattices and their underlying posets. 
  In particular, generalized snake posets and $\{(2+2),(1+1+1)\}$-free posets appear as the characterization. 
\end{abstract}
\maketitle

\section{Introduction}
Let $\kk$ be a field and let $S$ be a finitely generated $\kk$-domain.
We call an abelian group $(\Delta, +, \leq)$ equipped with a total order $\leq$
a \textit{linearly ordered group} if the total order $\leq$ satisfies that $\delta_1 \leq \delta_2$
implies $\delta_1 + \delta_3 \leq \delta_2 + \delta_3$ for all
$\delta_1, \delta_2, \delta_3 \in \Delta$.
We consider a discrete valuation $v : S \to \Delta$.
The valuation $v$ gives a $\Delta$-filtration $T_{v} = (T_{v \geq \delta})_{\delta \in \Delta}$
on $S$, where $T_{v \geq \delta}$ (resp. $T_{v > \delta}$) is defined by
\begin{equation*}
  T_{v \geq \delta} \coloneqq \{f \in S \mid v(f) \geq \delta\} \cup \{0\} \quad
  (\text{resp. } T_{v > \delta} \coloneqq \{f \in S \mid v(f) > \delta\} \cup \{0\}).
\end{equation*}
The associated graded ring $\gr_{v}(S)$ induced by $v$ and $T_v$ is
\begin{equation*}
  \gr_{v}(S) = \bigoplus_{\delta \in \Delta} T_{v \geq \delta} / T_{v > \delta}.
\end{equation*}
For $f \in S \setminus \{0\}$, we can consider its image $\bar{f}$ in $\gr_{v}(S)$,
namely the image of $f$ in $T_{v \geq \delta} / T_{v > \delta}$ where $v(f) = \delta$.
For a $\kk$-subalgebra $R$, a set $\calF \subset R$ is a \textit{Khovanskii basis}
with respect to $v$ if the image of $\calF$ in the associated graded ring $\gr_{v}(S)$
forms a set of generators of the image of $R$ in $\gr_{v}(S)$.
For more details, see \cite{KhovanskiiBasis}. 

Let $S = \kk[x_1, \ldots, x_n]$ be the polynomial ring in $n$ variables over $\kk$.
We use an abbreviation of monomials $x_1^{a_1} \cdots x_n^{a_n}$ with $x^\ab$ for $\ab = (a_1, \ldots, a_n) \in (\intnonneg)^n$. 
A \textit{monomial order} $\preceq$ is a total order on the set of
all monomials in $S$ such that $x^{\ab} \preceq x^{\bb}$ implies $x^{\ab+\cb} \preceq x^{\bb+\cb}$ 
for each $x^{\ab}, x^{\bb}, x^{\cb} \in S$ and $1$ is the least monomial.
The largest term of $f \in S$ with respect to $\preceq$ is called the \textit{initial term} of $f$ with respect to $\preceq$
and we denote it as $\ini_{\preceq}(f)$. Let $R$ be a finitely generated $\kk$-subalgebra of $S$.
A $\kk$-vector space spanned by $\{\ini_{\preceq}(f) \mid f \in R\}$
is called the \textit{initial algebra} of $R$ and denoted by $\ini_{\preceq}(R)$. 
A subset $\calF$ of $R$ is called a \textit{SAGBI basis} of $R$ if
$\ini_{\preceq}(\calF) \coloneqq \{\ini_{\preceq}(f) \mid f \in \calF\}$
generates the $\kk$-subalgebra $\ini_{\preceq}(R)$.
The word ``SAGBI" is introduced by Robbiano and Sweedler~\cite{RobbianoSubalgebra} and  stands
for "Subalgebra Analogue to Gr\"{o}bner Bases for Ideal", and
Kaveh and Manon~\cite{KhovanskiiBasis} generalized the notion of SAGBI bases by far with Khovanskii bases.

Let $P$ be a finite poset with a partial order $\leq_P$. A \textit{poset ideal} of $P$ is a subset $\alpha$ of $P$
satisfying that, whenever $a \in \alpha$ and $b \in P$ with $b \leq_P a$, one has $b \in \alpha$.
Let $L(P)$ be the set of poset ideals of $P$. Since the empty set and $P$ itself
are elements of $L(P)$, and
since $\alpha \cap \beta$ and $\alpha \cup \beta$ belong to $L(P)$ for all
$\alpha,\beta \in L(P)$, the set $L(P)$ forms a finite lattice ordered by inclusion, 
where the join and meet operations correspond to union and intersection, respectively.
Moreover, there is a one-to-one correspondence between finite distributive lattices $L(P)$ and finite posets $P$. It is known as Birkhoff's representation theorem \cite{Birkhoff1967}.

In this paper, we study Khovanskii bases of subalgebras arising from finite distributive lattices $L(P)$.
Let $S = \kk[x_{\alpha} \mid \alpha \in L(P)]$,
$\calF_{L(P)} := \{f_{\alpha, \beta} = x_{\alpha}x_{\beta}-x_{\alpha \wedge \beta}x_{\alpha \vee \beta} \in S \mid \alpha, \beta \in L(P) \text{ are incomparable}\}$, and $\calR(L(P))=\kk[\calF_{L(P)}] \subset S$. 
This subalgebra $\calR(L(P))$ is a subalgebra analogue of the Hibi ideal of a distributive lattice.
For more details on Hibi ideals, see~\cite[Chapter 6]{BinomialIdeals}.
A monomial order $\preceq$ on $S$ is said to be \textit{compatible} 
if $\ini_{\preceq}(f_{\alpha, \beta}) = x_{\alpha}x_{\beta}$ for all $\alpha, \beta \in L(P)$
for which $\alpha$ and $\beta$ are incomparable.
Such a monomial order exists because any partial order has a linear extension. 
In fact, let $\leq_{L(P)}^{*}$ be a linear extension of $\leq_{L(P)}$ and consider the degree reverse lexicographic order $\preceq$ whose variables satisfy $x_{\alpha} \preceq x_{\beta}$
if $\alpha \leq_{L(P)}^{*} \beta$ holds in $L(P)$.
It then follows that this monomial order $\preceq$ is compatible.

Our goal is to characterize the class of distributive lattices such that $\calF_{L(P)}$ forms a Khovanskii basis of $\calR(L(P))$ 
with respect to a compatible monomial order.
To this end, it suffices to consider only the case where $P$ is irreducible with respect to ordinal sum, i.e., $P$ cannot be written as the ordinal sum of two proper subposets of $P$ (see Lemma~\ref{lemma:main lemma}). 
Here, the \textit{ordinal sum} of two posets $(P,\leq_P)$ and $(Q,\leq_Q)$ is the poset $P\oplus Q$ on $P\cup Q$ such that $a \leq_{P\oplus Q} b$ if (i) $a,b\in P$ and $a\leq_P b$, (ii) $a,b\in Q$ and $a\leq_Q b$, or (iii) $a\in P$ and $b\in Q$.
The following is our main result:
\begin{theorem}\label{theorem:main theorem}
Let $P$ be a finite poset and assume that $P$ is irreducible with respect to ordinal sum.
Then the following conditions are equivalent:
\begin{enumerate}
    \item[{\em (1)}] $\calF_{L(P)}$ forms a Khovanskii basis of $\calR(L(P))$ with respect to a compatible monomial order; 
    \item[{\em (2)}] $P$ is $\{(2+2),(1+1+1)\}$-free;
    \item[{\em (3)}] $L(P)$ is a generalized snake poset.
\end{enumerate}
\end{theorem}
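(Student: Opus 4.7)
The plan is to establish the cycle by proving (2) $\Leftrightarrow$ (3), (1) $\Rightarrow$ (2), and (2) $\Rightarrow$ (1) separately. The equivalence (2) $\Leftrightarrow$ (3) is a combinatorial characterization of generalized snake posets in terms of their underlying poset $P$ via Birkhoff's correspondence. Since $(1+1+1)$-freeness forces the width of $P$ to be at most $2$ and $(2+2)$-freeness forces $P$ to be an interval order, the combination of the two, together with irreducibility under ordinal sum, pins $P$ into a narrow class. I would unpack the definitions on both sides and verify directly that the lattice of poset ideals of such a $P$ is precisely a generalized snake poset, and conversely.

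For (1) $\Rightarrow$ (2), I plan to argue by contrapositive. Assuming $P$ contains an induced $(2+2)$ or $(1+1+1)$, I will locate four poset ideals $\alpha_1, \alpha_2, \beta_1, \beta_2 \in L(P)$ such that all four pairs $(\alpha_i, \beta_j)$ with $i,j\in\{1,2\}$ are incomparable in $L(P)$. The trivial monomial identity
\begin{equation*}
x_{\alpha_1} x_{\beta_1} \cdot x_{\alpha_2} x_{\beta_2} \;=\; x_{\alpha_1} x_{\beta_2} \cdot x_{\alpha_2} x_{\beta_1}
\end{equation*}
among initial terms of $\calF_{L(P)}$ then yields the SAGBI S-polynomial
$g := f_{\alpha_1, \beta_1} f_{\alpha_2, \beta_2} - f_{\alpha_1, \beta_2} f_{\alpha_2, \beta_1} \in \calR(L(P))$. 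A direct expansion reveals that the new leading term of $g$ contains a factor $x_{\gamma}$ (typically $\gamma=\emptyset$) which is comparable to every other variable factor appearing in that monomial, so the monomial admits no partition into incomparable pairs. Hence $\ini_\preceq(g) \notin \kk[\ini_\preceq(\calF_{L(P)})]$, contradicting the Khovanskii basis property. Explicit witnesses would be $(\alpha_1,\beta_1,\alpha_2,\beta_2) = (\{a\},\{c,d\},\{a,b\},\{c\})$ in $L(P)$ for $P=\{a<b\}\sqcup\{c<d\}=(2+2)$, and $(\{a\},\{b\},\{a,c\},\{b,c\})$ in the Boolean lattice $L(P)=B_3$ for $P=(1+1+1)$ on $\{a,b,c\}$.

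For (2) $\Rightarrow$ (1), I plan to invoke the Subduction Theorem of Kaveh--Manon \cite{KhovanskiiBasis}: $\calF_{L(P)}$ is a Khovanskii basis if and only if every binomial syzygy among the initial monomials $\{x_\alpha x_\beta : \alpha,\beta \text{ incomparable in } L(P)\}$ lifts to an element of $\calR(L(P))$ that subduces to zero via $\calF_{L(P)}$. Under the hypothesis that $L(P)$ is a generalized snake poset, the incomparable pairs in $L(P)$ are severely constrained, and I expect the binomial syzygies among the initial monomials to admit a clean structural classification. The approach is induction on the size of $L(P)$: peel off an outermost element of the snake, apply the inductive hypothesis to the resulting smaller generalized snake poset, and verify by direct computation that the new syzygies introduced by the peeled-off element subduce to zero.

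The main obstacle is the direction (2) $\Rightarrow$ (1): classifying all binomial syzygies among incomparable pairs in a generalized snake lattice and checking that each lifts to a subducible relation in $\calR(L(P))$. By contrast, (2) $\Leftrightarrow$ (3) should reduce to a direct combinatorial lemma, and (1) $\Rightarrow$ (2) is a concrete obstruction computation driven by the two quadruples exhibited above.
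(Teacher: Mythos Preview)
Your overall architecture matches the paper: prove $(2)\Leftrightarrow(3)$ combinatorially, prove $(1)\Rightarrow(2)$ by exhibiting an obstructing syzygy when $P$ contains $(2+2)$ or $(1+1+1)$, and prove the remaining direction by verifying subduction. Two points of comparison are worth making.

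For $(1)\Rightarrow(2)$, your idea is the same as the paper's, but you have hidden a localization step. Your explicit witnesses $\{a\},\{a,b\},\{c\},\{c,d\}$ are poset ideals of the small poset $(2+2)$, not of $P$ itself; in the full $P$ you must replace them by downward closures, and then the meets and joins entering $f_{\alpha_i,\beta_j}$ are computed in $L(P)$, not in the small sublattice. The paper resolves this cleanly via a separate lemma (Lemma~\ref{lemma:main lemma}): for any sublattice $L'\subset L(P)$, failure of the Khovanskii property for $\calF_{L'}$ implies failure for $\calF_{L(P)}$. With that lemma in hand, the explicit obstruction computations (your quadruples, the paper's Examples~\ref{example:dvisor lattice of 36} and~\ref{example:boolean lattice of rank 3}) need only be done once in the small lattices. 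Your direct route can be made to work, but the localization lemma is what makes it painless.

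For the positive direction, the paper takes a genuinely different and sharper route than your proposed induction. Rather than peeling off snake squares, the paper interprets $\kk[\ini_\preceq(\calF_{L(P)})]$ as the edge ring of the co-comparability graph $G_{L(P)}$ and invokes the known description of the toric ideal of an edge ring by primitive even closed walks. Because a generalized snake poset has width two, $G_{L(P)}$ is bipartite, so the generating syzygies come from even cycles. The key structural observation is that every even cycle of $G_{L(P)}$ is confined to a single ``rectangular strip'' of the snake, and the minimal sublattice containing its vertices is the divisor lattice of $2\cdot 3^m$ for some $m$. For such a lattice, $\calF_{L(P)}$ is exactly the set of $2\times 2$ minors of a $2\times(m+1)$ generic matrix, so $\calR(L(P))\cong A_{2,m+1}$ is a \Pluecker algebra, and the Khovanskii property is classical (Proposition~\ref{proposition:plucker}). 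Combined with the same localization Lemma~\ref{lemma:main lemma}, this finishes $(3)\Rightarrow(1)$ without any inductive bookkeeping. Your induction is not wrong in principle, but ``classify all binomial syzygies and check each subduces'' is precisely the step the \Pluecker identification lets you outsource.
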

\begin{remark}
As a corollary of Theorem~\ref{theorem:main theorem}, we find that a
generalized snake poset corresponds to a $\{(2+2),(1+1+1)\}$-free poset
that is irreducible with respect to the ordinal sum under the correspondence
provided by Birkhoff's representation theorem. 
\end{remark}
We call a poset is \textit{$(2+2)$-free} (resp. \textit{$(1+1+1)$-free}) if the poset does not contain
a poset described in Figure~\ref{fig:forbidden subposet 1} (resp. Figure~\ref{fig:forbidden subposet 2}) as a subposet.
We say that a poset is \textit{$\{(2+2),(1+1+1)\}$-free} if it is both $(2+2)$-free and $(1+1+1)$-free. 

\begin{figure}[ht]
  \begin{minipage}{0.48\columnwidth}
    \centering
    {\scalebox{0.45}{
      \begin{tikzpicture}[line width=0.05cm]
          \coordinate (N1) at (0,0); 
          \coordinate (N2) at (0,2);
          \coordinate (N3) at (2,0);
          \coordinate (N4) at (2,2);
          \draw  (N1)--(N2); 
          \draw  (N3)--(N4);
          \draw [line width=0.05cm, fill=white] (N1) circle [radius=0.15];
          \draw [line width=0.05cm, fill=white] (N2) circle [radius=0.15];
          \draw [line width=0.05cm, fill=white] (N3) circle [radius=0.15];
          \draw [line width=0.05cm, fill=white] (N4) circle [radius=0.15];
        \end{tikzpicture}
      }}
      \caption{A poset ``$(2+2)$''}
      \label{fig:forbidden subposet 1}
    \end{minipage}
    \begin{minipage}{0.5\columnwidth}
      \centering
      {\scalebox{0.45}{
        \begin{tikzpicture}[line width=0.05cm]
        \coordinate (N1) at (-2,0);
        \coordinate (N2) at (0,0);
        \coordinate (N3) at (2,0);
        \draw [line width=0.05cm, fill=white] (N1) circle [radius=0.15];
        \draw [line width=0.05cm, fill=white] (N2) circle [radius=0.15];
        \draw [line width=0.05cm, fill=white] (N3) circle [radius=0.15];
        \node at (0,-1) {$\;$};
        \node at (0,1) {$\;$};
      \end{tikzpicture}
        }}
        \caption{A poset ``$(1+1+1)$''}
        \label{fig:forbidden subposet 2}
    \end{minipage}
\end{figure}

$(2+2)$-free posets have been studied, e.g., in~\cite{BogartFreePoset,DukesVitKubitzkeCompositionmatrix, FishburnIntervalOrder}. 
For the investigation of $\{(2+2),(1+1+1)\}$-free posets that are irreducible with respect to ordinal sum, 
we use a one-to-one correspondence between $(2+2)$-free posets and composition matrices
constructed in~\cite{DukesVitKubitzkeCompositionmatrix}
(see Section~\ref{section:generalized snake posets and (2+2) and (1+1+1)-free posets}).
Generalized snake posets (see Definition~\ref{def:generalized snake poset}) have been also studied, e.g., in~\cite{lee2024generalizedsnakeposetsorder, generalizedsnakeposet1}. 

\medskip

This paper is organized as follows. 
In Section~\ref{section:Preliminaries}, we provide the basics of Khovanskii(and SAGBI) basis, toric ideals of edge rings, and \Pluecker algebras.
In Section~\ref{section:Applied Khovanskii basis criterion and Examples},
we apply the tools introduced in Section~\ref{section:Preliminaries} to our main targets and provide the necessary examples. 
In Section~\ref{section:generalized snake posets and (2+2) and (1+1+1)-free posets}, we show a one-to-one correspondence between generalized snake posets
and $\{(2+2),(1+1+1)\}$-free posets that are irreducible with respect to ordinal sum. 
Finally, we prove Theorem~\ref{theorem:main theorem} in the last part of Section~\ref{section:generalized snake posets and (2+2) and (1+1+1)-free posets}.

\subsection*{Acknowledgement}
The first author is partially supported by JSPS KAKENHI Grant Number JP24K00521 and JP21KK0043. 
The second author is partially supported by Grant-in-Aid for JSPS Fellows Grant JP22J20033.

\section{Preliminaries}\label{section:Preliminaries}
In this section, we introduce definitions and results about Khovanskii (and SAGBI) basis,
toric ideals of edge rings, and \Pluecker algebras.

\begin{definition}\label{definition:valuation}
  Let $(\Delta, +, \leq)$ be a linearly ordered group. A function $v : S \setminus \{0\} \to \Delta$
  is a \textit{valuation} over $\kk$ if it satisfies the following axioms:
  \begin{enumerate}
    \item If $f, g \in S \setminus \{0\}$ and $f+g \neq 0$, then we have $v(f+g) \geq \min\{v(f), v(g)\}$.
    \item If $f, g \in S \setminus \{0\}$, then we have $v(fg) = v(f) + v(g)$.
    \item If $f \in S \setminus \{0\}$ and $c \in \kk \setminus \{0\}$, then we have $v(cf) = v(f)$.
  \end{enumerate}
\end{definition}

Throughout this paper, we assume $S$ is a polynomial ring and we use valuations induced by
monomial orders. More precisely, let $S = \kk[x_1, \ldots, x_n]$ and let $\preceq$ be a
monomial order on $S$. Let $\Delta = \ZZ^n$ and let the equipped order $\leq$ be
a total order on $\ZZ^n$ by setting
\begin{equation*}
  \ab_1 \leq \ab_2 \stackrel{\mathrm{def}}{\iff} \xb^{\ab_1} \succeq \xb^{\ab_2}
\end{equation*}
for $\ab_1, \ab_2 \in \ZZ^n$. This gives a valuation $v : S \setminus \{0\} \to \ZZ^n$
over $\kk$ by putting $v(f) = \ab$ for $f \in S \setminus \{0\}$ with
$\ini_{\preceq}(f) = c x^{\ab}$. We use this valuation Throughout this paper for given
polynomial ring $S$ and monomial order $\preceq$. Then, the image of $f \in S$ in
\begin{equation*}
  \gr_{v}(S) = \bigoplus_{\ab \in \ZZ^n} T_{v \geq \ab} / T_{v > \ab}
\end{equation*}
is the initial term $\ini_{\preceq}(f)$. For a finitely generated $\kk$-subalgebra $R \subset S$,
the image of $R$ in $\gr_{v}(S)$ is $\ini_{\preceq}(R)$. Therefore, the set $\calF \subset R$
is a Khovanskii basis with respect to $v$ induced by $\preceq$ if and only if all initial
terms of $\calF$ generate $\ini_{\preceq}(R)$. Of course, it means $\calF$ is a SAGBI
basis with respect to $\preceq$. In what follows, we say $\calF$ is a Khovanskii basis of
$R$ with respect to a monomial order $\preceq$ if $\calF$ is a Khovanskii basis with respect to $v$ induced by $\preceq$.

Algorithm~\ref{algorithm:subduction} is a modification of~\cite[Algorithm 11.1]{SturmfelsLectureNote}.
In~\cite{SturmfelsLectureNote}, Khovanskii bases are are referred to as canonical bases.

\begin{algorithm}[ht]
  \caption{(The subduction algorithm)}\label{algorithm:subduction}
  \begin{algorithmic}
    \Require $\calF = \{f_1, \ldots, f_s\} \subset S = \kk[x_1, \ldots, x_n]$, $f \in S$
    \Ensure $q \in \kk[\calF], r \in S$ such that $f = q + r$
    \State $q \coloneqq 0; r \coloneqq 0$
    \State $p \coloneqq f$
    \While{$p \notin \kk$}
      \State find $i_1, i_2, \ldots, i_s \in \intnonneg$ and $c \in \kk\backslash\{0\}$ such that
      \begin{align}\label{eqn:initial representation}
	 \tag{$\ast$}
       \ini_{\preceq}(p) = c \cdot \prod_{j=1}^s{\ini_{\preceq}(f_j)}^{i_j}.
     \end{align}
      \If {representation~(\ref{eqn:initial representation}) exists}
      \State $q \coloneqq q + c \cdot f_1^{i_1} \cdot f_2^{i_2} \cdots f_s^{i_s}$
      \State $p \coloneqq p - c \cdot f_1^{i_1} \cdot f_2^{i_2} \cdots f_s^{i_s}$
      \Else
      \State $r \coloneqq r + \ini_{\preceq}{p}$
      \State $p \coloneqq p - \ini_{\preceq}{p}$
      \EndIf
    \EndWhile
    \State \Return $q, r$
  \end{algorithmic}
\end{algorithm}

Let $S=\kk[x_1,\ldots,x_n]$ and fix a monomial order $\preceq$ on $S$. 
Let $f_1,\ldots,f_s \in S$, let $\ab_1, \ab_2, \ldots, \ab_s \in (\intnonneg)^n$ with $\ini_{\preceq}(f_i) = x^{\ab_i}$ for each $i$, 
and let $\calA = (\ab_1, \ab_2, \ldots, \ab_s)$ be the $n \times s$-matrix whose columns are $\ab_i$'s. 
The toric ideal $I_{\calA}$ of $\calA$ is the kernel of a $\kk$-algebra homomorphism 
\begin{equation*}
  \kk[X_1, \ldots, X_s] \rightarrow S, \quad X_i \mapsto x^{\ab_i}. 
\end{equation*}
We can determine whether $\calF$ forms a Khovanskii basis as follows. 
\begin{proposition}[See~{\cite[Corollary 11.5]{SturmfelsLectureNote}}]\label{prop:Khovanskii criterion}
Let $\{p_1, p_2, \ldots, p_t\}$ be generators of the toric ideal $I_{\calA}$. 
Then $\calF$ forms a Khovanskii basis of $R$ if and only if Algorithm~\ref{algorithm:subduction} reduces
$p_i(f_1, f_2, \ldots, f_s)$ to an element of $\kk$ by $\calF$ for each $i$. 
\end{proposition}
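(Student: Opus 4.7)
The plan is to treat the two implications separately. The forward direction follows quickly from the definition of a Khovanskii basis, while the backward direction requires a descent argument on monomial orders to rebuild an arbitrary element of $R$ out of $\calF$.

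For the forward direction, suppose $\calF$ is a Khovanskii basis and run Algorithm~\ref{algorithm:subduction} on $p_i(f_1, \ldots, f_s)$. Initially $p = p_i(f_1,\ldots,f_s)$ lies in $R = \kk[\calF]$, and each iteration subtracts an element of $\kk[\calF]$ from $p$, so $p$ remains in $R$ throughout. Since $\calF$ is a Khovanskii basis, $\ini_\preceq(p) \in \kk[\ini_\preceq(\calF)]$ at every step, and because $\ini_\preceq(p)$ is itself a monomial, it must be a scalar multiple of a product $\prod_j \ini_\preceq(f_j)^{i_j}$; hence the representation~(\ref{eqn:initial representation}) always exists and the ``else'' branch is never triggered, so $r$ stays $0$. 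As $\preceq$ is a well-order on monomials of $S$ and $\ini_\preceq(p)$ strictly decreases at each step, the loop terminates with $p \in \kk$.

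For the backward direction, assume each $p_i(f_1, \ldots, f_s)$ reduces to an element of $\kk$ by $\calF$ and take $g \in R$; the goal is $\ini_\preceq(g) \in \kk[\ini_\preceq(\calF)]$. Expand $g = \sum_\alpha c_\alpha f^\alpha$ with $c_\alpha \in \kk \setminus \{0\}$, and define the complexity of this expansion to be $M := \max_\alpha \ini_\preceq(f^\alpha)$ under the monomial order on $S$. Among all expansions of $g$, fix one whose $M$ is minimal; this is possible because $\preceq$ is a well-order. Set $A = \{\alpha : \ini_\preceq(f^\alpha) = M\}$. If $\ini_\preceq(g) = M$, then $\ini_\preceq(g)$ is a nonzero scalar multiple of $\prod_j \ini_\preceq(f_j)^{\alpha_j}$ for any $\alpha \in A$, and hence lies in $\kk[\ini_\preceq(\calF)]$, completing this case.

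Otherwise $\ini_\preceq(g) \prec M$, which means the leading terms at level $M$ cancel and $h := \sum_{\alpha \in A} c_\alpha X^\alpha$ lies in $I_\calA$ (since $X_j \mapsto x^{\ab_j}$ sends $h$ to $0$). Using the $\calA$-grading $\deg_\calA(X_j) = \ab_j$, both $I_\calA$ and $h$ are $\calA$-homogeneous, so we may choose $q_1,\ldots,q_t \in \kk[X_1,\ldots,X_s]$ homogeneous with $h = \sum_i q_i p_i$. The subduction hypothesis gives $p_i(f_1,\ldots,f_s) = r_i + c_i'$ with $c_i' \in \kk$ and $r_i$ a $\kk$-linear combination of products of $f_j$'s whose initial monomials are strictly $\prec$ the common initial monomial of the terms of $p_i(f_1,\ldots,f_s)$ (these are all equal by $\calA$-homogeneity of $p_i$). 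Substituting $X_j \mapsto f_j$ into $h = \sum_i q_i p_i$ and using the $\calA$-homogeneity of each $q_i$ together with the multiplicative compatibility of $\preceq$, one checks that every term in the resulting expansion has initial monomial $\prec M$. Substituting this replacement for $\sum_{\alpha \in A} c_\alpha f^\alpha$ back into the original representation of $g$ produces a new expansion with strictly smaller $M$, contradicting minimality. The main obstacle is verifying the last claim: that every product arising from the substitution $h = \sum_i q_i p_i$ has initial monomial $\prec M$, which relies on combining the $\calA$-homogeneity of each $q_i$ with the strict drop guaranteed by subduction of the individual $p_i(f_1,\ldots,f_s)$.
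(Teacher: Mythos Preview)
The paper does not supply its own proof of this proposition but cites it from Sturmfels' lecture notes, and your argument is exactly the standard one found there: the forward direction is immediate from the definition, and the backward direction chooses a representation $g=\sum_\alpha c_\alpha f^\alpha$ minimizing $M=\max_\alpha\ini_\preceq(f^\alpha)$ and derives a contradiction by rewriting the cancelling top part through the toric relations.

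On the step you flag as the main obstacle, here is the verification you left implicit. Write $h=\sum_i q_ip_i$ with $q_i$ of $\calA$-degree $\ab-\bb_i$, where $x^{\ab}=M$ and $\bb_i$ is the $\calA$-degree of $p_i$. Substituting $X_j\mapsto f_j$ gives $h(f)=\sum_i q_i(f)\,r_i+\sum_i c_i'\,q_i(f)$. Every product $f^{\gamma}f^{\delta}$ appearing in $q_i(f)\,r_i$ satisfies $\ini_\preceq(f^{\gamma+\delta})=x^{\ab-\bb_i}\cdot\ini_\preceq(f^\delta)\prec x^{\ab-\bb_i}\cdot x^{\bb_i}=M$, since the subduction of $p_i(f_1,\ldots,f_s)$ guarantees $\ini_\preceq(f^\delta)\prec x^{\bb_i}$. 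Every term $f^\gamma$ in $c_i'\,q_i(f)$ satisfies $\ini_\preceq(f^\gamma)=x^{\ab-\bb_i}\prec x^\ab=M$ because $\bb_i\neq 0$ and $\ab-\bb_i\in(\intnonneg)^n$ whenever $q_i\neq 0$. So the rewritten expansion of $g$ indeed has strictly smaller maximum, and the contradiction goes through. Note that you are tacitly assuming the $p_i$ are $\calA$-homogeneous; this is the standing convention in Sturmfels and is automatic for the binomial generators $p_\Gamma$ used throughout this paper.
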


To apply Proposition~\ref{prop:Khovanskii criterion} to $\calF_{L(P)}$ for a distributive lattice
$L(P)$, we introduce the notion of edge rings of graphs and describe their toric ideals.
For more details on toric ideals of edge rings, see~\cite[Section 5.3]{BinomialIdeals}.

Let $G$ be a finite simple graph on a vertex set $[n] = \{1, \ldots, n\}$ with an edge set $E(G) = \{e_1, \ldots, e_s\}$. 
The \textit{edge ring} of $G$ is the $\kk$-subalgebra of $S$ generated by $\{x_ix_j \mid \{i, j\} \in E(G)\}$, denoted by $\kk[G]$. 
The toric ideal of $\kk[G]$, denoted by $I_G$, is the kernel of the $\kk$-algebra homomorphism
\begin{equation*}
  \kk[X_1, \ldots, X_s] \to \kk[G], \quad X_{l} \mapsto \xb^{e_l}, 
\end{equation*}
where $\xb^{e_l} = x_ix_j$ for $e_l = \{i,j\}$.
Given an even closed walk $\Gamma = (e_{i_1}, e_{i_2}, \ldots, e_{i_{2q}})$ of $G$ with each $e_k \in E(G)$,
we write $p_{\Gamma}$ for the binomial
\begin{equation*}
  p_{\Gamma} = \prod_{k=1}^{q}X_{i_{2k-1}} - \prod_{k=1}^{q}X_{i_{2k}}
\end{equation*}
belonging to $I_G$. We abbreviate $\prod_{k=1}^{q}X_{i_{2k-1}}$ to $p_{\Gamma}^{(+)}$
and $\prod_{k=1}^{q}X_{i_{2k}}$ to $p_{\Gamma}^{(-)}$. An even closed walk $\Gamma$ of $G$
is called \textit{primitive} if there exists no even closed walk $\Gamma'$ of $G$ with
$p_{\Gamma'} \neq p_{\Gamma}$ for which $p_{\Gamma'}^{(+)}$ divides $p_{\Gamma}^{(+)}$, and $p_{\Gamma'}^{(-)}$ divides $p_{\Gamma}^{(-)}$.

\begin{lemma}[{\cite[Lemma 3.1]{OhsugiHibi1999}}]\label{lemma:generators of toric ideal}
  A binomial $f \in I_G$ is primitive if and only if there exists a primitive even closed walk
  $\Gamma$ of $G$ such that $f = p_{\Gamma}$. In particular, the toric ideal $I_G$ is generated
  by those binomials $p_{\Gamma}$, where $\Gamma$ is a primitive even closed walk of $G$.
\end{lemma}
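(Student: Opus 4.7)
My plan is to translate the statement into the language of lattice points in the kernel of the vertex-edge incidence matrix of $G$ and then to recover even closed walks from such lattice points by an alternating edge-extraction argument. Let $A$ denote the vertex-edge incidence matrix of $G$. Every binomial in $I_G$ reduces, after cancelling the common monomial factor, to a pure difference $\xb^{\ub}-\xb^{\vb}$ with $\ub,\vb\in\intnonneg^{s}$ of disjoint support, and such a binomial lies in $I_G$ precisely when $A(\ub-\vb)=0$.

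The first step I would carry out is to show that any such $(\ub,\vb)$ can be realized by an even closed walk. The condition $A\ub=A\vb$ asserts that at each vertex the multiplicity of incident $\ub$-edges equals the multiplicity of incident $\vb$-edges, so starting from any vertex incident to an edge of $\ub$ and alternately choosing a $\ub$-edge and a $\vb$-edge, one never gets stuck at an intermediate vertex thanks to the balance condition, and eventually must return to the starting vertex. The resulting walk $\Gamma$ has even length by construction, and its associated binomial $p_{\Gamma}$ satisfies $p_{\Gamma}^{(+)}\mid\xb^{\ub}$ and $p_{\Gamma}^{(-)}\mid\xb^{\vb}$.

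Next I would handle both directions. For the ``if'' direction, I would assume $\Gamma$ is primitive but $p_{\Gamma}=\xb^{\ub}-\xb^{\vb}$ is not primitive in $I_G$; then some binomial $\xb^{\ub'}-\xb^{\vb'}\in I_G$ distinct from $p_{\Gamma}$ satisfies $\ub'\leq\ub$ and $\vb'\leq\vb$ coordinatewise, and applying the first step to $(\ub',\vb')$ produces an even closed walk $\Gamma'$ with $p_{\Gamma'}^{(+)}\mid p_{\Gamma}^{(+)}$ and $p_{\Gamma'}^{(-)}\mid p_{\Gamma}^{(-)}$, contradicting the primitivity of $\Gamma$. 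For the ``only if'' direction, I would take a primitive binomial $f=\xb^{\ub}-\xb^{\vb}\in I_G$, use the first step to extract an even closed walk $\Gamma$ with $p_{\Gamma}$ dividing $f$ in the sense of the primitivity definition, and note that primitivity of $f$ forces $p_{\Gamma}=f$, while the same argument carried out inside $\Gamma$ shows that $\Gamma$ itself is primitive. The final ``in particular'' sentence follows from the standard fact that every toric ideal is generated by its primitive binomials.

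The step I expect to be the main obstacle is the alternating edge-extraction when edges occur with multiplicity, since the alternation must be performed on a multiset of edge-uses rather than on a set of edges. In particular, one must argue that the walk can be chosen to be a single connected closed walk whose associated binomial is $p_{\Gamma}$ itself, rather than a product of binomials coming from several disjoint even closed subwalks; ruling out such a product decomposition is exactly what forces the extracted walk to represent a single indecomposable relation, and keeping careful bookkeeping of the vertex-balance condition during the extraction will be the technical heart of the argument.
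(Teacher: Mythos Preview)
The paper does not prove this lemma; it is quoted verbatim from \cite[Lemma 3.1]{OhsugiHibi1999} and used as a black box, so there is no ``paper's own proof'' to compare against. Your outline is essentially the standard argument one finds in the literature (and in Ohsugi--Hibi): reduce to pure binomials $\xb^{\ub}-\xb^{\vb}$ with disjoint support, use the vertex-balance condition $A\ub=A\vb$ to perform an alternating walk that extracts an even closed walk $\Gamma$ with $p_{\Gamma}^{(+)}\mid\xb^{\ub}$ and $p_{\Gamma}^{(-)}\mid\xb^{\vb}$, and then play this extraction against the two notions of primitivity.

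One remark on your final paragraph: the worry you flag is largely a non-issue. The alternating walk construction, by its nature, produces a \emph{single} closed walk---you start at a vertex, alternately consume $\ub$-edges and $\vb$-edges, and stop the first time you return to the start with the correct parity. You do not need the resulting $p_{\Gamma}$ to equal $\xb^{\ub}-\xb^{\vb}$; you only need $p_{\Gamma}^{(\pm)}$ to divide $\xb^{\ub}$ and $\xb^{\vb}$ respectively, which is automatic since the walk was built from the multisets $\ub$ and $\vb$. Primitivity of the original binomial then forces equality. So there is no need to rule out ``products of binomials from several disjoint subwalks''; the extraction simply does not produce such a thing. The only genuine bookkeeping is tracking edge multiplicities so that each step of the walk consumes one unit of the relevant coordinate, and ensuring the walk closes with even length---both of which follow directly from the balance condition.
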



The characterization of primitive even closed walks is given in \cite[Lemma 3.2]{OhsugiHibi1999}.
In particular, when the graph $G$ is bipartite, its primitive even closed walks are characterized as follows:

\begin{lemma}[See {\cite[Corollary 5.12]{BinomialIdeals}}]\label{corollary:toric ideal of bipartite graph}
    Let $G$ be a bipartite graph. Then every primitive even closed walk is an even cycle.
    In particular, the toric ideal $I_G$ is generated by those binomials $f_C$, where $C$ is an even cycle of $G$.
\end{lemma}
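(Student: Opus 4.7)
The plan is to argue by contradiction: suppose $\Gamma = (e_{i_1}, \ldots, e_{i_{2q}})$ is a primitive even closed walk in the bipartite graph $G$ but $\Gamma$ is not a cycle. Then some vertex $v$ appears at two distinct positions of its vertex sequence $v_0, v_1, \ldots, v_{2q} = v_0$, say $v = v_k = v_l$ with $0 \le k < l < 2q$. Splitting at $v$ yields two sub-walks $\Gamma_1 = (e_{i_{k+1}}, \ldots, e_{i_l})$ and $\Gamma_2 = (e_{i_{l+1}}, \ldots, e_{i_{2q}}, e_{i_1}, \ldots, e_{i_k})$, each a closed walk based at $v$.

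The bipartiteness of $G$ enters here: fixing a bipartition $V(G) = V_1 \sqcup V_2$, every edge traversed swaps sides, so any closed walk at $v$ has even length. Hence both $\Gamma_1$ and $\Gamma_2$ are even closed walks, and their binomials $p_{\Gamma_1}, p_{\Gamma_2} \in I_G$ are well-defined. I next verify that, possibly after reversing orientation, one of them provides $p_{\Gamma'}^{(+)} \mid p_\Gamma^{(+)}$ and $p_{\Gamma'}^{(-)} \mid p_\Gamma^{(-)}$ with $p_{\Gamma'} \ne p_\Gamma$, contradicting primitivity. If $k$ is even, the native odd positions of $\Gamma_1$ are $k+1, k+3, \ldots, l-1$, which remain odd as positions in $\Gamma$, so the divisibility is immediate; if $k$ is odd, reversing $\Gamma_1$ places its first edge at the original index $l$ (also odd), which restores the same alignment and yields the same conclusion.

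The second assertion then follows at once from Lemma~\ref{lemma:generators of toric ideal}, since $I_G$ is generated by the binomials of primitive even closed walks, and these are now all even cycles. The only substantive obstacle is the parity bookkeeping in the divisibility step; this is precisely the place where bipartiteness is used, as in the non-bipartite setting the sub-walks could have odd length and the clean correspondence between the plus/minus partition of edges of $\Gamma_1$ and that of $\Gamma$ would break down.
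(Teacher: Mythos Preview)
The paper does not supply its own proof of this lemma; it is quoted from \cite[Corollary 5.12]{BinomialIdeals} without argument. Your proof is a correct, self-contained argument. The key point---that bipartiteness forces both sub-walks $\Gamma_1$ and $\Gamma_2$ to have even length---is exactly what is needed, and the parity bookkeeping you describe (reversing $\Gamma_1$ when $k$ is odd, which works because $l-k$ even and $k$ odd force $l$ odd) correctly aligns the plus/minus factors. Since $\Gamma_1$ has strictly fewer edges than $\Gamma$, the binomials $p_{\Gamma_1}$ and $p_\Gamma$ have different degrees, so $p_{\Gamma_1} \neq p_\Gamma$ and primitivity is violated.

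For comparison, the route implicit in the literature (and hinted at in the paper via the reference to \cite[Lemma~3.2]{OhsugiHibi1999} placed just before this lemma) proceeds through the explicit structural classification of primitive even closed walks: each such walk is either an even cycle, or two odd cycles meeting in exactly one vertex, or two vertex-disjoint odd cycles joined by a path. Since a bipartite graph has no odd cycles, only the first possibility survives. Your argument bypasses this classification and works directly from the definition of primitivity; it is more elementary and avoids importing the structure theorem, while the classification route has the advantage of yielding finer information in the non-bipartite setting.
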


Finally, we introduce \Pluecker algebras and their Khovanskii bases. 
It turns out that some of $\calR(L(P))$ can be regarded as a \Pluecker algebra. 
For more details on \Pluecker algebras, see, e.g., \cite[Chapter 14]{MillerSturmfels} and \cite[Chapter 11]{SturmfelsLectureNote}. 
\begin{definition}
  Let $1 \leq k < d$ and let $X = (x_{ij})_{1 \leq i \leq k, 1 \leq j \leq d}$
  be a $k \times d$-matrix of indeterminates. Given
  $I \in \mathbf{I}_{k,d} \coloneqq \{I \subset [d] \mid |I|=k\}$,
  let $X_I$ denote the $k \times k$-submatrix of $X$ whose columns are indexed by $I$. 
  We define the subalgebra of the polynomial ring $\kk[x_{ij} \mid 1 \leq i \leq k, 1 \leq j \leq d]$ as follows: 
  \begin{equation*}
    A_{k,d} \coloneqq \kk[\det(X_I) \mid I \in \mathbf{I}_{k,d}].
  \end{equation*}
  The $\kk$-algebra $A_{k,d}$ is called the \textit{\Pluecker algebra}.
\end{definition}

\begin{proposition}[See, e.g.,~{\cite[Theorem 14.11]{MillerSturmfels}}]\label{proposition:plucker}
The generating set $\{\det(X_I) \mid I \in \mathbf{I}_{k,d}\}$ forms a Khovanskii basis of $A_{k,d}$ with respect to a monomial order 
such that the monomial of the initial term of each minor $\det(X_I)$ is the product of diagonal entries of $X_I$. 
\end{proposition}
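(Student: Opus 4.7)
The plan is to apply Proposition~\ref{prop:Khovanskii criterion}. First, fix a diagonal monomial order $\preceq$ on the ambient polynomial ring $\kk[x_{ij} \mid 1 \le i \le k,\ 1 \le j \le d]$, for instance the lexicographic order with $x_{ij} \succ x_{i'j'}$ iff $i < i'$, or $i = i'$ and $j < j'$. Expanding
\begin{equation*}
\det(X_I) = \sum_{\sigma} \operatorname{sgn}(\sigma) \prod_{j=1}^{k} x_{j, i_{\sigma(j)}}
\end{equation*}
for $I = \{i_1 < \cdots < i_k\}$, one checks that the $\sigma = \mathrm{id}$ term dominates: if $j$ is the smallest index with $\sigma(j) \ne j$, then $i_{\sigma(j)} > i_j$ makes that factor strictly $\prec$-smaller than $x_{j, i_j}$, while all earlier factors agree. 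Hence $\ini_{\preceq}(\det(X_I)) = m_I \coloneqq \prod_{j=1}^{k} x_{j, i_j}$.

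Next I identify the toric ideal of $\calA = (m_I)_{I \in \mathbf{I}_{k,d}}$ inside $\kk[T_I : I \in \mathbf{I}_{k,d}]$. The set $\mathbf{I}_{k,d}$ with componentwise order on the sorted tuples is a distributive lattice $L$; its meet and join are given componentwise, and the relation $m_I m_J = m_{I \wedge J} m_{I \vee J}$ holds identically because $x_{j,i_j} x_{j,j_j} = x_{j, \min(i_j, j_j)} x_{j, \max(i_j, j_j)}$ for every $j$. The substitution $T_I \mapsto m_I$ therefore realizes $\kk[m_I]$ as the Hibi ring of $L$, and by Hibi's theorem its toric ideal is generated by the quadratic binomials
\begin{equation*}
T_I T_J - T_{I \wedge J}\,T_{I \vee J}, \qquad I, J \in \mathbf{I}_{k,d} \text{ incomparable}.
\end{equation*}
By Proposition~\ref{prop:Khovanskii criterion}, it now suffices to show that for every incomparable pair $I, J$, Algorithm~\ref{algorithm:subduction} subduces
\begin{equation*}
g_{I,J} \coloneqq \det(X_I)\det(X_J) - \det(X_{I \wedge J})\det(X_{I \vee J})
\end{equation*}
to an element of $\kk$ (in fact to $0$) using $\calF = \{\det(X_I) : I \in \mathbf{I}_{k,d}\}$.

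For this last step I invoke the classical three-term Pl\"ucker/Sylvester identity: by successively exchanging a chosen column of $I$ with the columns of $J$---always picking the smallest index at which $I$ and $J$ disagree---one obtains the straightening expansion
\begin{equation*}
\det(X_I)\det(X_J) = \det(X_{I \wedge J})\det(X_{I \vee J}) + \sum_{\ell} c_\ell \det(X_{K_\ell})\det(X_{L_\ell}),
\end{equation*}
in which each pair $(K_\ell, L_\ell)$ is "more standard" than $(I, J)$, with diagonal monomial $m_{K_\ell} m_{L_\ell}$ strictly $\prec$-smaller than $m_I m_J = m_{I \wedge J} m_{I \vee J}$. Substituting this into $g_{I,J}$ and iterating Algorithm~\ref{algorithm:subduction}, the leading monomial strictly decreases at every pass with respect to a well-founded order on pairs of index sets, so the algorithm terminates with remainder~$0$.

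The main obstacle is the termination argument just invoked: rigorously proving that the straightening law rewrites $\det(X_I)\det(X_J)$ as a $\kk$-linear combination of products of minors whose diagonal monomials are all strictly $\prec$-smaller than $m_I m_J$. This demands a careful choice of measure on pairs $(I, J)$---for instance a lexicographic order on such pairs, or the word-length of the straightening rewriting---together with a verification, via the three-term Pl\"ucker identity applied at the smallest index of disagreement, that every exchange step strictly decreases this measure. Once this descent is established, Proposition~\ref{prop:Khovanskii criterion} immediately delivers the conclusion.
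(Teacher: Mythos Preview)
The paper provides no proof of this proposition; it is quoted from \cite[Theorem~14.11]{MillerSturmfels} as a known result, so there is nothing in the paper to compare your argument against.

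Your route via Proposition~\ref{prop:Khovanskii criterion} is sound in outline. The first two steps are correct: the lexicographic order you choose is diagonal, and the toric ideal of the family $(m_I)_I$ is indeed the Hibi ideal of the distributive lattice $\mathbf{I}_{k,d}$, generated by the sorting relations $T_I T_J - T_{I\wedge J} T_{I\vee J}$. The gap you flag at the end, however, is not a loose end but the actual substance of the proposition. The descent you sketch is essentially a proof of the straightening law for $A_{k,d}$, which is of the same order of difficulty as the statement being proved. The argument in the cited reference proceeds more directly: once the straightening law is in hand (standard bitableaux---products $\det(X_{I_1})\cdots\det(X_{I_r})$ with $I_1 \le \cdots \le I_r$ in $\mathbf{I}_{k,d}$---form a $\kk$-basis of $A_{k,d}$), one observes that their diagonal initial terms are pairwise distinct monomials in $\kk[m_I : I]$, and hence these monomials span $\ini_\preceq(A_{k,d})$, giving the Khovanskii property immediately. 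Your subduction argument would also go through once the straightening law is available, but the detour through Proposition~\ref{prop:Khovanskii criterion} is then longer than this direct route.
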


\section{Application of Khovanskii basis criterion}\label{section:Applied Khovanskii basis criterion and Examples}
We apply Proposition~\ref{prop:Khovanskii criterion} to $\calF_{L(P)}$
for a distributive lattice $L(P)$. If a monomial order is compatible,
then the toric ideal is equal to the defining ideal of
\begin{equation*}
  \kk[\ini_{\preceq}(\calF_{L(P)})] = \kk[x_{\alpha}x_{\beta} \mid \alpha, \beta \in L(P) \text{ are incomparable}].
\end{equation*}
Let $L(P) = \{\alpha_1, \ldots, \alpha_n\}$. 
We define the graph $G_{L(P)}$ as a graph on a vertex set $[n]$ 
with an edge set $\{\{i, j\} \mid \alpha_i, \alpha_j \text{ are incomparable in } L(P)\}$.
The graph $G_{L(P)}$ is called the \textit{co-comparability graph} of $L(P)$.
We see that the subalgebra $\kk[\ini_{\preceq}(\calF_{L(P)})]$ under consideration coincides with the edge ring $\kk[G_{L(P)}]$ of $G_{L(P)}$. 
Let $\{\Gamma_1, \ldots, \Gamma_t\}$ be the set of all primitive even closed walks of $G_{L(P)}$ and 
consider the associated binomials $p_{\Gamma_1}, \ldots, p_{\Gamma_t} \in \kk[X_{\{i,j\}} \mid \alpha_i, \alpha_j \text{ are incomparable in } L(P)]$. 
Then $I_{G_{L(P)}}=(p_{\Gamma_1}, \ldots, p_{\Gamma_t})$. 

We have a Khovanskii basis criterion for $\calF_{L(P)}$ as follows.
\begin{lemma}\label{lemma:Khovanskii criterion for calF_L(P)}
  The set of polynomials $\calF_{L(P)}$ forms a Khovanskii basis of $\calR(L(P))$ with respect to a compatible monomial order 
  if and only if Algorithm~\ref{algorithm:subduction} reduces the polynomial $p_{\Gamma_i}(f_{\alpha_j, \alpha_k})_{\{j,k\} \in E(G_{L(P)})}$ 
  obtained by assigning $f_{\alpha_j, \alpha_k}$ to each variable $X_{\{j,k\}}$ of $p_{\Gamma_i}$ 
  to an element of $\kk$ by $\calF_{L(P)}$ for each $i=1, \ldots, t$.
\end{lemma}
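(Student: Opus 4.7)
The plan is to derive this lemma as a direct specialization of the general Khovanskii basis criterion in Proposition~\ref{prop:Khovanskii criterion} to the setup of $\calF_{L(P)}$ equipped with a compatible monomial order.

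First I would unwind the definitions. Enumerating $L(P) = \{\alpha_1, \ldots, \alpha_n\}$ and letting $\preceq$ be a compatible monomial order, we have $\ini_{\preceq}(f_{\alpha_i, \alpha_j}) = x_{\alpha_i} x_{\alpha_j}$ whenever $\alpha_i$ and $\alpha_j$ are incomparable. Hence, in the notation of Proposition~\ref{prop:Khovanskii criterion}, the exponent vectors attached to the initial terms of $\calF_{L(P)}$ are exactly the incidence vectors of edges of the co-comparability graph $G_{L(P)}$. In particular, the subalgebra $\kk[\ini_{\preceq}(\calF_{L(P)})]$ coincides with the edge ring $\kk[G_{L(P)}]$, and the matrix $\calA$ of Proposition~\ref{prop:Khovanskii criterion} corresponds to the vertex--edge incidence matrix of $G_{L(P)}$. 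Consequently, the toric ideal $I_{\calA}$ attached to the initial terms is precisely $I_{G_{L(P)}}$.

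Next I would invoke Lemma~\ref{lemma:generators of toric ideal} to describe generators of this toric ideal. That lemma guarantees that $I_{G_{L(P)}}$ is generated by the binomials $p_{\Gamma_1}, \ldots, p_{\Gamma_t}$ associated with the primitive even closed walks of $G_{L(P)}$. Under the correspondence of variables $X_{\{j,k\}} \leftrightarrow f_{\alpha_j, \alpha_k}$, the element $p_{\Gamma_i}(f_{\alpha_j, \alpha_k})_{\{j,k\} \in E(G_{L(P)})}$ is the polynomial one must test in Proposition~\ref{prop:Khovanskii criterion}. Substituting this explicit generating set into that proposition yields exactly the stated criterion: $\calF_{L(P)}$ is a Khovanskii basis of $\calR(L(P))$ if and only if Algorithm~\ref{algorithm:subduction} reduces each such $p_{\Gamma_i}(f_{\alpha_j, \alpha_k})$ to an element of $\kk$.

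The proof is therefore purely bookkeeping; there is no substantial obstacle, as everything reduces to identifying the matrix $\calA$ from Proposition~\ref{prop:Khovanskii criterion} with the edge-incidence matrix of $G_{L(P)}$ and then transferring the generating set provided by Lemma~\ref{lemma:generators of toric ideal}.
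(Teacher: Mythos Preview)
Your proposal is correct and matches the paper's own proof, which simply states that the lemma follows from Proposition~\ref{prop:Khovanskii criterion} and Lemma~\ref{lemma:generators of toric ideal}. You have merely made explicit the identification of $I_{\calA}$ with $I_{G_{L(P)}}$ that the paper sets up in the paragraph preceding the lemma.
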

\begin{proof}
  This lemma follows from Proposition~\ref{prop:Khovanskii criterion} and Lemma~\ref{lemma:generators of toric ideal}. 
\end{proof}

The following lemma will play an essential role in the proof of our main theorem. 
\begin{lemma}\label{lemma:main lemma}
  Fix a compatible monomial order. 
  Let $\Gamma_1,\ldots,\Gamma_s$ be primitive even closed walks of $G_{L(P)}$ and 
  $L_i$ the minimal sublattice of $L(P)$ containing $\{\alpha_i \mid i \in V(\Gamma_i)\}$ for each $i$. 
  Then $\calF_{L(P)}$ forms a Khovanskii basis of $\calR(L(P))$ if and only if $\calF_{L_i}$ forms a Khovanskii basis of $\calR(L_i)$ for each $i$.
\end{lemma}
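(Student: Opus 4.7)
The plan is to apply Lemma~\ref{lemma:Khovanskii criterion for calF_L(P)} on both sides and translate the ``Khovanskii basis'' condition into the statement that certain polynomials $p_\Gamma(f_{\alpha_j,\alpha_k})$ reduce to a constant under Algorithm~\ref{algorithm:subduction}. The heart of the argument is the following locality observation: since $L_i$ is a sublattice, it is closed under $\wedge$ and $\vee$, so for any $\alpha, \beta \in L_i$ the polynomial $f_{\alpha,\beta} = x_\alpha x_\beta - x_{\alpha \wedge \beta} x_{\alpha \vee \beta}$ lies in $\kk[x_\gamma \mid \gamma \in L_i]$. Consequently, if Algorithm~\ref{algorithm:subduction} is run with $\calF_{L(P)}$ on an input whose initial term is a monomial in variables indexed by $L_i$, then every polynomial $f_j$ selected in the ``If'' branch must lie in $\calF_{L_i}$: its initial term $\ini_\preceq(f_j) = x_\alpha x_\beta$ must divide a monomial in variables indexed by $L_i$, forcing $\alpha, \beta \in L_i$. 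Inductively, the intermediate polynomial $p$ remains in $\kk[x_\alpha \mid \alpha \in L_i]$ throughout the execution.

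For $(\Leftarrow)$, Lemma~\ref{lemma:Khovanskii criterion for calF_L(P)} applied to $L(P)$ reduces the claim to showing that each $p_{\Gamma_i}(f_{\alpha_j,\alpha_k})$ subduces to a constant via $\calF_{L(P)}$. Since $V(\Gamma_i) \subset L_i$, this polynomial lies in $\calR(L_i)$, and the hypothesis that $\calF_{L_i}$ is a Khovanskii basis of $\calR(L_i)$ guarantees that subduction via $\calF_{L_i}$ reduces it to a constant. Because $\calF_{L_i} \subset \calF_{L(P)}$, any representation $\ini_\preceq(p) = c \prod \ini_\preceq(f_j)^{i_j}$ used along the way with $f_j \in \calF_{L_i}$ is also a valid representation for $\calF_{L(P)}$, so the same sequence of subtractions is a valid run of the subduction algorithm with $\calF_{L(P)}$.

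For $(\Rightarrow)$, fix $i$ and apply Lemma~\ref{lemma:Khovanskii criterion for calF_L(P)} to $L_i$ (which is again a distributive lattice as a sublattice of $L(P)$): it suffices to prove, for each primitive even closed walk $\Gamma'$ of $G_{L_i}$, that $p_{\Gamma'}(f_{\alpha_j,\alpha_k})$ subduces to a constant via $\calF_{L_i}$. Since the vertices of $\Gamma'$ lie in $L_i$, this polynomial also lies in $\calR(L(P))$, and by the Khovanskii basis hypothesis on $\calF_{L(P)}$, subduction via $\calF_{L(P)}$ reduces it to a constant. By the locality observation, this computation only invokes members of $\calF_{L_i}$, so it is at the same time a valid run of the subduction algorithm with $\calF_{L_i}$, completing the argument. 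The main obstacle is to make the locality observation airtight, in particular verifying that the closure of $L_i$ under $\wedge$ and $\vee$ genuinely prevents any step of the algorithm from introducing a variable $x_\gamma$ with $\gamma \notin L_i$; once this is in place, both directions reduce to matching up runs of the subduction algorithm in the two ambient rings.
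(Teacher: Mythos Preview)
Your proposal is correct and rests on the same key observation as the paper: since $L_i$ is a sublattice, it is closed under $\wedge$ and $\vee$, so $\calF_{L_i} \subset \calF_{L(P)}$ and any monomial in the variables indexed by $L_i$ that lies in $\kk[\ini_\preceq(\calF_{L(P)})]$ already lies in $\kk[\ini_\preceq(\calF_{L_i})]$. The one cosmetic difference is in the $(\Rightarrow)$ direction: the paper argues by contrapositive directly at the level of initial algebras (pick a monomial $m \in \ini_\preceq(\calR(L_i)) \setminus \kk[\ini_\preceq(\calF_{L_i})]$ and observe that the extra generators in $\ini_\preceq(\calF_{L(P)})$ all carry a variable outside $L_i$, so $m \notin \kk[\ini_\preceq(\calF_{L(P)})]$ either), whereas you phrase the same locality through runs of the subduction algorithm; the content is identical.
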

\begin{proof}
  We prove that $\calF_{L(P)}$ does not form a Khovanskii basis of $\calR(L(P))$ if and only if
  $\calF_{L_i}$ does not form a Khovanskii basis of $\calR(L_i)$ for some $i$. 

  Assume $\calF_{L(P)}$ does not form a Khovanskii basis. Then there exists $p_{\Gamma_i}$ such that the polynomial
  \begin{equation}\label{equation:unsubduceable polynomial}
    p_{\Gamma_i}(f_{\alpha_j, \alpha_k})_{\{j,k\} \in E(G_{L(P)})}
  \end{equation}
  cannot be reduced to an element of $\kk$. 
  Since $G_{L_i}$ is an induced subgraph of $G_{L(P)}$ and $\Gamma_i$ is an even closed walk of $G_{L_i}$,
  $p_{\Gamma_i}$ is a binomial in $I_{G_{L_i}} \subset \kk[X_{\{j,k\}} \mid \{j,k\} \in E(G_{L_i})]$. 
  Thus, the polynomial~(\ref{equation:unsubduceable polynomial})
  cannot be reduced to an element of $\kk$ by $\calF_{L_i}$, either. 
  Hence, $\calF_{L_i}$ does not form a Khovanskii basis by Lemma~\ref{lemma:Khovanskii criterion for calF_L(P)}.

  Assume that $\calF_{L_i}$ does not form a Khovanskii basis of $\calR(L_i)$ for some $i$. 
  Then there exists a monomial $m \in \kk[x_{\alpha} \mid \alpha \in L_i]$ such that 
  $m \in \ini_{\preceq}(\calR(L_i)) \setminus \kk[\ini_{\preceq}(\calF_{L_i})]$. 
  Since $L_i$ is a sublattice of $L(P)$, the join and meet of any two elements in $L_i$ coincide with their join and meet in $L(P)$, respectively.
  Thus, we have $\calF_{L_i} \subset \calF_{L(P)}$ and $m \in \ini_{\preceq}(\calR(L_i)) \subset \ini_{\preceq}(\calR(L(P)))$. 
  However, $m \not\in \kk[\ini_{\preceq}(\calF_{L(P)})]$ because $m \not\in \kk[\ini_{\preceq}(\calF_{L_i})]$ 
  and monomials in $\ini_{\preceq}(\calF_{L(P)}) \setminus \ini_{\preceq}(\calF_{L_i})$ 
  have a variable corresponding to an element of $L(P) \setminus L_i$. 
  Hence, $\calF_{L(P)}$ does not form a Khovanskii basis.
\end{proof}
By this lemma, the distributive lattices under consideration can be minimized. 
In particular, we can assume that $P$ is irreducible with respect to ordinal sum. 
In fact, if $P$ is an ordinal sum of two posets $P_1$ and $P_2$, 
then the co-comparability graph $G_{L(P)}$ can be separated into two disconnected graphs $G_{L(P_1)}$ and $G_{L(P_2)}$,

In the rest of this section, we provide examples of distributive lattices $L(P)$ such that 
$\calF_{L(P)}$ forms, or does not form a Khovanskii basis. 
Those examples will be a part of the proof of Theorem~\ref{theorem:main theorem} 
thanks to Lemma~\ref{lemma:main lemma}. 

\begin{example}\label{example:dvisor lattice of 36}
  Let $L(P)$ be the divisor lattice of $36=2^2 \cdot 3^2$. Then $P$ is nothing but the poset in Figure~\ref{fig:forbidden subposet 1}. 
  We label the elements of $L(P)$ as shown in Figure~\ref{fig:divisor lattice of 36}, 
  and the co-comparability graph $G_{L(P)}$ is depicted in Figure~\ref{fig:co-comparability graph of divisor lattice}. 
 \begin{figure}[ht]
    \begin{minipage}{0.45\columnwidth}
    \centering
    {\scalebox{0.45}{
    \begin{tikzpicture}[line width=0.05cm]
      \coordinate (a1) at (0, 0);
      \coordinate (a2) at (-2, 2);
      \coordinate (a3) at (2, 2);
      \coordinate (a4) at (-4, 4);
      \coordinate (a5) at (0, 4);
      \coordinate (a6) at (4, 4);
      \coordinate (a7) at (-2, 6);
      \coordinate (a8) at (2, 6);
      \coordinate (a9) at (0, 8);
      
      \draw (a1) -- (a2);
      \draw (a1) -- (a3);
      \draw (a2) -- (a4);
      \draw (a2) -- (a5);
      \draw (a3) -- (a5);
      \draw (a3) -- (a6);
      \draw (a4) -- (a7);
      \draw (a5) -- (a7);
      \draw (a5) -- (a8);
      \draw (a6) -- (a8);
      \draw (a7) -- (a9);
      \draw (a8) -- (a9);

      \draw [line width=0.05cm, fill=white] (a1) circle [radius=0.15] node [below=2pt] {\Huge $\alpha_1$};
      \draw [line width=0.05cm, fill=white] (a2) circle [radius=0.15] node [below left=1pt] {\Huge $\alpha_2$};
      \draw [line width=0.05cm, fill=white] (a3) circle [radius=0.15] node [below right=1pt] {\Huge $\alpha_3$};
      \draw [line width=0.05cm, fill=white] (a4) circle [radius=0.15] node [left=2pt] {\Huge $\alpha_4$};
      \draw [line width=0.05cm, fill=white] (a5) circle [radius=0.15] node [below=8pt] {\Huge $\alpha_5$};
      \draw [line width=0.05cm, fill=white] (a6) circle [radius=0.15] node [right=2pt] {\Huge $\alpha_6$};
      \draw [line width=0.05cm, fill=white] (a7) circle [radius=0.15] node [above left=1pt] {\Huge $\alpha_7$};
      \draw [line width=0.05cm, fill=white] (a8) circle [radius=0.15] node [above right=2pt] {\Huge $\alpha_8$};
      \draw [line width=0.05cm, fill=white] (a9) circle [radius=0.15] node [above=2pt] {\Huge $\alpha_9$};
    \end{tikzpicture}
    }}
    \caption{$L(P)$}
    \label{fig:divisor lattice of 36}
    \end{minipage} 
    \begin{minipage}{0.45\columnwidth}
    \centering
    {\scalebox{0.45}{
    \begin{tikzpicture}[line width=0.05cm]
      \coordinate (a1) at (0, 0);
      \coordinate (a2) at (-2, 2);
      \coordinate (a3) at (2, 2);
      \coordinate (a4) at (-4, 4);
      \coordinate (a5) at (0, 4);
      \coordinate (a6) at (4, 4);
      \coordinate (a7) at (-2, 6);
      \coordinate (a8) at (2, 6);
      \coordinate (a9) at (0, 8);
      
      \draw (a2) -- (a3);
      \draw (a2) -- (a6);
      \draw (a3) -- (a4);
      \draw (a4) -- (a5);
      \draw (a4) to [out=15,in=165] (a6);
      \draw (a4) -- (a8);
      \draw (a5) -- (a6);
      \draw (a6) -- (a7);
      \draw (a7) -- (a8);

      \draw [line width=0.05cm, fill=white] (a1) circle [radius=0.15] node [below=2pt] {\Huge $1$};
      \draw [line width=0.05cm, fill=white] (a2) circle [radius=0.15] node [below left=1pt] {\Huge $2$};
      \draw [line width=0.05cm, fill=white] (a3) circle [radius=0.15] node [below right=1pt] {\Huge $3$};
      \draw [line width=0.05cm, fill=white] (a4) circle [radius=0.15] node [left=2pt] {\Huge $4$};
      \draw [line width=0.05cm, fill=white] (a5) circle [radius=0.15] node [below=4pt] {\Huge $5$};
      \draw [line width=0.05cm, fill=white] (a6) circle [radius=0.15] node [right=2pt] {\Huge $6$};
      \draw [line width=0.05cm, fill=white] (a7) circle [radius=0.15] node [above left=1pt] {\Huge $7$};
      \draw [line width=0.05cm, fill=white] (a8) circle [radius=0.15] node [above right=2pt] {\Huge $8$};
      \draw [line width=0.05cm, fill=white] (a9) circle [radius=0.15] node [above=2pt] {\Huge $9$};
    \end{tikzpicture}
    }}
    \caption{$G_{L(P)}$}
    \label{fig:co-comparability graph of divisor lattice}
    \end{minipage}
    \end{figure}

    For the co-comparability graph $G_{L(P)}$, we can find a primitive even closed walk
  \begin{equation*}
    \Gamma = \{\{2,3\}, \{3,4\}, \{4,6\}, \{2,6\}\}.
  \end{equation*}
  Therefore, $p_{\Gamma} = X_{\{2,3\}}X_{\{4,6\}} - X_{\{3,4\}}X_{\{2,6\}}$
  is one of generators of $I_{G_{L(P)}}$. Then
  \begin{align*}
    p_{\Gamma}(f_{\alpha_j, \alpha_k})_{\{j,k\} \in E(G_{L(P)})}
    &= f_{\alpha_2,\alpha_3}f_{\alpha_4,\alpha_6} - f_{\alpha_3,\alpha_4}f_{\alpha_2,\alpha_6} \\
    &= (x_{\alpha_2}x_{\alpha_3} - x_{\alpha_1}x_{\alpha_5})(x_{\alpha_4}x_{\alpha_6} - x_{\alpha_1}x_{\alpha_9}) \\
    &\quad - (x_{\alpha_3}x_{\alpha_4} - x_{\alpha_1}x_{\alpha_7})(x_{\alpha_2}x_{\alpha_6} - x_{\alpha_1}x_{\alpha_8}) \\
    &= -x_{\alpha_1}x_{\alpha_2}x_{\alpha_3}x_{\alpha_9} - x_{\alpha_1}x_{\alpha_4}x_{\alpha_5}x_{\alpha_6}
    + x_{\alpha_1}^2x_{\alpha_5}x_{\alpha_9} \\
    & \quad + x_{\alpha_1}x_{\alpha_3}x_{\alpha_4}x_{\alpha_8} + x_{\alpha_1}x_{\alpha_2}x_{\alpha_6}x_{\alpha_7}
    - x_{\alpha_1}^2x_{\alpha_7}x_{\alpha_8}
  \end{align*}
  No term can be written as a product of monomials in $\ini_{\preceq}(\calF_{L(P)})$
  because every term contains either $x_{\alpha_1}$ or $x_{\alpha_9}$, 
  while $\alpha_1$ and $\alpha_9$ are comparable to every other element of $L(P)$.
  Thus, this polynomial cannot be reduced to an element of $\kk$ by $\calF_{L(P)}$. 
  This implies that $\calF_{L(P)}$ does not form a Khovanskii basis by Lemma~\ref{lemma:main lemma}.
\end{example}
\begin{example}\label{example:boolean lattice of rank 3}
  Let $L(P)$ be the boolean lattice of rank $3$. Then $P$ is nothing but the poset in Figure~\ref{fig:forbidden subposet 2}. 
  We label the elements of $L(P)$ as shown in Figure~\ref{fig:boolean lattice of rank 3}, 
  and the co-comparability graph $G_{L(P)}$ is depicted in Figure~\ref{fig:co-comparability graph of boolean lattice}. 
  \begin{figure}[ht]
    \begin{minipage}{0.45\columnwidth}
    \centering
    {\scalebox{0.45}{
      \begin{tikzpicture}[line width=0.05cm]
        \coordinate (a1) at (0, 0);
        \coordinate (a2) at (-3, 2);
        \coordinate (a3) at (0, 2);
        \coordinate (a4) at (3, 2);
        \coordinate (a5) at (-3, 4);
        \coordinate (a6) at (0, 4);
        \coordinate (a7) at (3, 4);
        \coordinate (a8) at (0, 6);
        
        \draw (a1) -- (a2);
        \draw (a1) -- (a3);
        \draw (a1) -- (a4);
        \draw (a2) -- (a5);
        \draw (a2) -- (a6);
        \draw (a3) -- (a5);
        \draw (a3) -- (a7);
        \draw (a4) -- (a6);
        \draw (a4) -- (a7);
        \draw (a5) -- (a8);
        \draw (a6) -- (a8);
        \draw (a7) -- (a8);
  
        \draw [line width=0.05cm, fill=white] (a1) circle [radius=0.15] node [below=2pt] {\Huge $\alpha_1$};
        \draw [line width=0.05cm, fill=white] (a2) circle [radius=0.15] node [below left=1pt] {\Huge $\alpha_2$};
        \draw [line width=0.05cm, fill=white] (a3) circle [radius=0.15] node [below left=1pt] {\Huge $\alpha_3$};
        \draw [line width=0.05cm, fill=white] (a4) circle [radius=0.15] node [below right=1pt] {\Huge $\alpha_4$};
        \draw [line width=0.05cm, fill=white] (a5) circle [radius=0.15] node [above left=1pt] {\Huge $\alpha_5$};
        \draw [line width=0.05cm, fill=white] (a6) circle [radius=0.15] node [above left=1pt] {\Huge $\alpha_6$};
        \draw [line width=0.05cm, fill=white] (a7) circle [radius=0.15] node [above right=1pt] {\Huge $\alpha_7$};
        \draw [line width=0.05cm, fill=white] (a8) circle [radius=0.15] node [above=2pt] {\Huge $\alpha_8$};
      \end{tikzpicture}
    }}
    \caption{$L(P)$}
    \label{fig:boolean lattice of rank 3}
    \end{minipage} 
    \begin{minipage}{0.45\columnwidth}
    \centering
    {\scalebox{0.45}{
      \begin{tikzpicture}[line width=0.05cm]
        \coordinate (a1) at (0, 0);
        \coordinate (a2) at (-3, 2);
        \coordinate (a3) at (0, 2);
        \coordinate (a4) at (3, 2);
        \coordinate (a5) at (-3, 4);
        \coordinate (a6) at (0, 4);
        \coordinate (a7) at (3, 4);
        \coordinate (a8) at (0, 6);
        
        \draw (a2) -- (a3);
        \draw (a2) to [out=-30,in=-150] (a4);
        \draw (a2) -- (a7);
        \draw (a3) -- (a4);
        \draw (a3) -- (a6);
        \draw (a4) -- (a5);
        \draw (a5) -- (a6);
        \draw (a5) to [out=30,in=150] (a7);
        \draw (a6) -- (a7);
  
        \draw [line width=0.05cm, fill=white] (a1) circle [radius=0.15] node [below=2pt] {\Huge $1$};
        \draw [line width=0.05cm, fill=white] (a2) circle [radius=0.15] node [below left=1pt] {\Huge $2$};
        \draw [line width=0.05cm, fill=white] (a3) circle [radius=0.15] node [below left=1pt] {\Huge $3$};
        \draw [line width=0.05cm, fill=white] (a4) circle [radius=0.15] node [below right=1pt] {\Huge $4$};
        \draw [line width=0.05cm, fill=white] (a5) circle [radius=0.15] node [above left=1pt] {\Huge $5$};
        \draw [line width=0.05cm, fill=white] (a6) circle [radius=0.15] node [above left=1pt] {\Huge $6$};
        \draw [line width=0.05cm, fill=white] (a7) circle [radius=0.15] node [above right=1pt] {\Huge $7$};
        \draw [line width=0.05cm, fill=white] (a8) circle [radius=0.15] node [above=2pt] {\Huge $8$};
      \end{tikzpicture}
    }}
    \caption{$G_{L(P)}$}
    \label{fig:co-comparability graph of boolean lattice}
    \end{minipage}
    \end{figure}

  For the co-comparability graph $G_{L(P)}$, we can find a primitive even closed walk
  \begin{equation*}
    \Gamma = \{\{2,4\}, \{4,5\}, \{5,7\}, \{2,7\}\}.
  \end{equation*}
  Therefore, $p_{\Gamma} = X_{\{2,4\}}X_{\{5,7\}} - X_{\{4,5\}}X_{\{2,7\}}$ is one of generators of $I_{G_{L(P)}}$. Then
  \begin{align*}
    p_{\Gamma}(f_{\alpha_j, \alpha_k})_{\{j,k\} \in E(G_{L(P)})}
    &= f_{\alpha_2,\alpha_4}f_{\alpha_5,\alpha_7} - f_{\alpha_4,\alpha_5}f_{\alpha_2,\alpha_7} \\
    &= (x_{\alpha_2}x_{\alpha_4} - x_{\alpha_1}x_{\alpha_6})(x_{\alpha_5}x_{\alpha_7} - x_{\alpha_3}x_{\alpha_8})\\
    & \quad - (x_{\alpha_4}x_{\alpha_5} - x_{\alpha_1}x_{\alpha_8})(x_{\alpha_2}x_{\alpha_7} - x_{\alpha_1}x_{\alpha_8}) \\
    &= -x_{\alpha_2}x_{\alpha_3}x_{\alpha_4}x_{\alpha_8} - x_{\alpha_1}x_{\alpha_5}x_{\alpha_6}x_{\alpha_7}
    + x_{\alpha_1}x_{\alpha_3}x_{\alpha_6}x_{\alpha_8} \\
    & \quad + x_{\alpha_1}x_{\alpha_4}x_{\alpha_5}x_{\alpha_8} + x_{\alpha_1}x_{\alpha_2}x_{\alpha_7}x_{\alpha_8}
    - x_{\alpha_1}^2x_{\alpha_8}^2
  \end{align*}
  No term can be written as a product of monomials in $\ini_{\preceq}(\calF_{L(P)})$
  because every term contains either $x_{\alpha_1}$ or $x_{\alpha_8}$, while $\alpha_1$ and $\alpha_8$ are comparable to every other element of $L(P)$. 
  Thus, this polynomial cannot be reduced to an element of $\kk$ by $\calF_{L(P)}$. 
  This implies that $\calF_{L(P)}$ does not form a Khovanskii basis by Lemma~\ref{lemma:main lemma}. 
\end{example}

\begin{example}\label{example:divisor lattice of 2*3^m}
  Let $L(P)$ be a divisor lattice of $2 \cdot 3^m(m \geq 1)$.
  We label the elements of $L(P)$ as
  shown in Figure~\ref{fig:divisor lattice of 2*3^m}.
  \begin{figure}[ht]
    \centering
  {\scalebox{0.6}{
  \begin{tikzpicture}[line width=0.05cm]
  
  \coordinate (N11) at (-1,1); 
  \coordinate (N12) at (0,2); 
  \coordinate (N13) at (2,4); 
  
  \coordinate (N21) at (0,0); 
  \coordinate (N22) at (1,1); 
  \coordinate (N23) at (3,3); 
  \coordinate (N24) at (4,4); 
  \coordinate (N25) at (3,5);
  
  \draw  (N11)--(N12); 
  \draw  (N12)--(0.5,2.5); 
  \draw  (1.5,3.5)--(N13); 
  \draw[dotted]  (0.8,2.8)--(1.2,3.2);
  
  \draw  (N21)--(N22);
  \draw  (N22)--(1.5,1.5); 
  \draw  (2.5,2.5)--(N23);
  \draw  (N23)--(N24);
  \draw[dotted]  (1.8,1.8)--(2.2,2.2);
  \draw  (N24)--(N25);
  
  \draw  (N11)--(N21);
  \draw  (N12)--(N22);
  \draw  (N13)--(N23);
  \draw  (N13)--(N25);
  
  \draw [line width=0.05cm, fill=white] (N11) circle [radius=0.15] node [left=2pt] {\LARGE $\alpha_1$};
  \draw [line width=0.05cm, fill=white] (N12) circle [radius=0.15] node [left=2pt] {\LARGE $\alpha_2$};
  \draw [line width=0.05cm, fill=white] (N13) circle [radius=0.15] node [left=2pt] {\LARGE $\alpha_m$};
  \draw [line width=0.05cm, fill=white] (N25) circle [radius=0.15] node [left=2pt] {\LARGE $\alpha_{m+1}$};
  \draw [line width=0.05cm, fill=white] (N21) circle [radius=0.15] node [right=2pt] {\LARGE $\beta_1$};
  \draw [line width=0.05cm, fill=white] (N22) circle [radius=0.15] node [right=2pt] {\LARGE $\beta_2$};
  \draw [line width=0.05cm, fill=white] (N23) circle [radius=0.15] node [right=2pt] {\LARGE $\beta_m$};
  \draw [line width=0.05cm, fill=white] (N24) circle [radius=0.15] node [right=2pt] {\LARGE $\beta_{m+1}$};
  
  \end{tikzpicture}
  }}
  \caption{$L(P)$}
  \label{fig:divisor lattice of 2*3^m}
  \end{figure}
  
  Let $\calF_{L(P)} = \{x_{\alpha_i}x_{\beta_j} - x_{\beta_i}x_{\alpha_j} \mid 1 \leq i < j \leq m+1\}$.
  Then we observe that $\calF_{L(P)}$ is equal to the set of all $2 \times 2$-minors of a matrix of indeterminates 
  $\begin{pmatrix}
    x_{\alpha_1} & x_{\alpha_2} & \cdots & x_{\alpha_{m+1}} \\
    x_{\beta_1} & x_{\beta_2} & \cdots & x_{\beta_{m+1}}
    \end{pmatrix}$
  since \begin{equation*}
    f_{\alpha_i, \beta_j} = x_{\alpha_i}x_{\beta_j} - x_{\beta_i}x_{\alpha_j}
    = \det{\begin{pmatrix}
      x_{\alpha_i} & x_{\alpha_j} \\
      x_{\beta_i} & x_{\beta_j}
      \end{pmatrix}}. 
  \end{equation*}
  Therefore, we see that $\calR(L(P))$ is isomorphic to the \Pluecker algebra $A_{2,m+1}$. 
  Moreover, for a compatible monomial order $\preceq$, we have $\ini_{\preceq}(f_{\alpha_i, \beta_j})=x_{\alpha_i}x_{\beta_j}$, which is the product of diagonal entries of the submatrix. 
  Thus $\calF_{L(P)}$ is a Khovanskii basis of $\calR(L(P))$ with respect to a compatible monomial order by Proposition~\ref{proposition:plucker}. 
\end{example}

\section{generalized snake posets and $\{(2+2), (1+1+1)\}$-free posets}\label{section:generalized snake posets and (2+2) and (1+1+1)-free posets}
In this section,
we show a one-to-one correspondence between generalized snake posets 
and $\{(2+2),(1+1+1)\}$-free posets that are irreducible with respect to ordinal sum.

First, we introduce generalized snake posets. 
\begin{definition}[{\cite[Definition 3.1]{generalizedsnakeposet1}}]\label{def:generalized snake word}
  For $l \in \intnonneg$, a \textit{generalized snake word} is a word of the form 
  $\wb = w_0 w_1 \cdots w_l$, where $w_0 = \epsilon$ is the empty letter and $w_i$ is in the alphabet $\{L, R\}$ for $i = 1, \ldots, l$. 
\end{definition}
\begin{definition}[{\cite[Definition 3.2]{generalizedsnakeposet1}}]\label{def:generalized snake poset}
  Given a generalized snake word $\wb = w_0 w_1 \cdots w_l$, the \textit{generalized snake poset} $P(\wb)$ is recursively defined as follows:
  \begin{itemize}
    \item $P(w_0) = P(\epsilon)$ is the poset on elements 
    $\{\alpha_0, \alpha_1, \alpha_2, \alpha_3\}$ with cover relations
    $\alpha_0 \lessdot \alpha_1, \alpha_0 \lessdot \alpha_2, \alpha_1 \lessdot \alpha_3$
    and $\alpha_2 \lessdot \alpha_3$, where $\lessdot$ denotes the cover relation. 
    \item $P(w_0 w_1 \cdots w_l)$ is the poset
    $P(w_0 w_1 \cdots w_{l-1}) \cup \{\alpha_{2l+2}, \alpha_{2l+3}\}$
    with the added cover relations
    $\alpha_{2l+1} \lessdot \alpha_{2l+3}$, $\alpha_{2l+2} \lessdot \alpha_{2l+3}$, and 
    \begin{equation*}
      \left\{ \,
          \begin{aligned}
          & \alpha_{2l-1} \lessdot \alpha_{2l+2},
          \text{ if } l = 1 \text{ and } w_l = L,
          \text{ or } l \geq 2 \text{ and } w_{l-1}w_l \in \{RL, LR\}, \\
          & \alpha_{2l} \lessdot \alpha_{2l+2},
          \text{ if } l = 1 \text{ and } w_l = R,
          \text{ or } l \geq 2 \text{ and } w_{l-1}w_l \in \{LL, RR\}.
          \end{aligned}
      \right.
      \end{equation*}
  \end{itemize}
\end{definition}
  If $w = w_0w_1\cdots w_l$ is a generalized snake word, then $P(w)$ is a distributive lattice of width two and rank $l + 2$. 
  Note that although we refer to~\cite[Definition 3.2]{generalizedsnakeposet1} for the definition of generalized snake posets,  
  we employ the definition of generalized snake posets with upside down version for the convenience in our discussions. 

\begin{example}\label{example:small examples of generalized snake poset}
  The generalized snake poset $P(\epsilon LLRL)$ is the distributive
  lattice depicted in Figure~\ref{fig:small example of generalized snake poset}.
  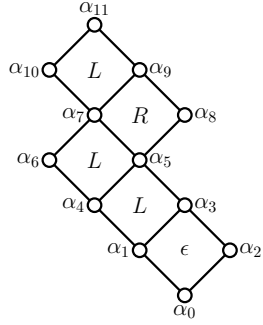
\begin{figure}[ht]
  \centering
  {\scalebox{0.60}{
    \begin{tikzpicture}[line width=0.05cm]

      \coordinate (N0) at (0,0);   
      \coordinate (N1) at (-1,1);  
      \coordinate (N2) at (1,1);   
      \coordinate (N3) at (0,2);   
      \coordinate (N4) at (-2,2);  
      \coordinate (N5) at (-1,3);  
      \coordinate (N6) at (-3,3);  
      \coordinate (N7) at (-2,4);  
      \coordinate (N8) at (0,4);   
      \coordinate (N9) at (-1,5);  
      \coordinate (N10) at (-3,5); 
      \coordinate (N11) at (-2,6); 
    
      \draw (N0)--(N1); \draw (N0)--(N2); \draw (N1)--(N3); \draw (N2)--(N3); 
      \draw (N1)--(N4); \draw (N4)--(N5); \draw (N3)--(N5); 
      \draw (N4)--(N6); \draw (N6)--(N7); \draw (N5)--(N7); 
      \draw (N5)--(N8); \draw (N8)--(N9); \draw (N7)--(N9); 
      \draw (N7)--(N10); \draw (N9)--(N11); \draw (N10)--(N11); 
    
      \draw [line width=0.05cm, fill=white] (N0) circle [radius=0.15] node [below=2pt] {\LARGE $\alpha_0$};
      \draw [line width=0.05cm, fill=white] (N1) circle [radius=0.15] node [left=2pt] {\LARGE $\alpha_1$};
      \draw [line width=0.05cm, fill=white] (N2) circle [radius=0.15] node [right=2pt] {\LARGE $\alpha_2$};
      \draw [line width=0.05cm, fill=white] (N3) circle [radius=0.15] node [right=2pt] {\LARGE $\alpha_3$};
      \draw [line width=0.05cm, fill=white] (N4) circle [radius=0.15] node [left=2pt] {\LARGE $\alpha_4$};
      \draw [line width=0.05cm, fill=white] (N5) circle [radius=0.15] node [right=2pt] {\LARGE $\alpha_5$};
      \draw [line width=0.05cm, fill=white] (N6) circle [radius=0.15] node [left=2pt] {\LARGE $\alpha_6$};
      \draw [line width=0.05cm, fill=white] (N7) circle [radius=0.15] node [left=2pt] {\LARGE $\alpha_7$};
      \draw [line width=0.05cm, fill=white] (N8) circle [radius=0.15] node [right=2pt] {\LARGE $\alpha_8$};
      \draw [line width=0.05cm, fill=white] (N9) circle [radius=0.15] node [right=2pt] {\LARGE $\alpha_9$};
      \draw [line width=0.05cm, fill=white] (N10) circle [radius=0.15] node [left=2pt] {\LARGE $\alpha_{10}$};
      \draw [line width=0.05cm, fill=white] (N11) circle [radius=0.15] node [above=2pt] {\LARGE $\alpha_{11}$};
    
      \node at (0,1) {\LARGE $\epsilon$}; 
      \node at (-1,2) {\LARGE $L$};      
      \node at (-2,3) {\LARGE $L$};      
      \node at (-1,4) {\LARGE $R$};
      \node at (-2,5) {\LARGE $L$};
    \end{tikzpicture}
  }}
  \caption{$P(\epsilon LLRL)$}
  \label{fig:small example of generalized snake poset}
\end{figure}
\end{example}

There are four possible shapes of generalized snake posets $P(w_0w_1 \cdots w_l)$.
These correspond to the following four cases:
\begin{itemize}
  \item $w_1 = L, w_l = R$,
  \item $w_1 = L, w_l = L$,
  \item $w_1 = R, w_l = R$,
  \item $w_1 = R, w_l = L$.
\end{itemize}
For example, given positive integers $s_1, \ldots, s_k, t_1, \ldots, t_k$ with $k \geq 1$, the generalized snake poset
$P(\epsilon L^{s_1}R^{t_1} \cdots L^{s_k}R^{t_k})$ is the distributive lattice depicted in Figure~\ref{fig:main lattice}. 
We omit the description of the other cases.
\begin{figure}[ht]
  \centering
  {\scalebox{0.70}{
    \begin{tikzpicture}[line width=0.05cm]
    
      \coordinate (N11) at (3,1); 
      \coordinate (N12) at (2,2); 
      \coordinate (N13) at (0,4); 
      \coordinate (N14) at (1,5);
      \coordinate (N15) at (3,7);
      \coordinate (N16) at (4,8); 
      \coordinate (N17) at (3,9);
      \coordinate (N19) at (-1,11);
      \coordinate (N110) at (0,12); 
      \coordinate (N111) at (2,14);
      \coordinate (N112) at (3,15);
      
      \coordinate (N21) at (2,0); 
      \coordinate (N22) at (1,1); 
      \coordinate (N23) at (-1,3); 
      \coordinate (N24) at (-2,4); 
      \coordinate (N25) at (-1,5);
      \coordinate (N26) at (0,6); 
      \coordinate (N27) at (2,8);
      \coordinate (N29) at (-2,12);
      \coordinate (N210) at (-1,13); 
      \coordinate (N211) at (1,15);
      \coordinate (N212) at (2,16); 
      
      \draw  (N11)--(N12); 
      \draw  (N12)--(1.5,2.5); 
      \draw  (0.5,3.5)--(N13); 
      \draw[dotted]  (1.2,2.8)--(0.8,3.2);
      \draw  (N13)--(N14);
      \draw  (N14)--(1.5,5.5);
      \draw  (2.5,6.5)--(N15);
      \draw  (N15)--(N16);
      \draw  (N16)--(N17);
      \draw[dotted]  (1.8,5.8)--(2.2,6.2);
      \draw  (N17)--(2.5,9.5);
      \draw  (-0.5,10.5)--(N19);
      \draw  (N19)--(N110);
      \draw  (N110)--(0.5,11.5);
      \draw  (N110)--(0.5,12.5);
      \draw  (1.5,13.5)--(N111);
      \draw  (N111)--(N112);
      \draw[dotted]  (0.8,12.8)--(1.2,13.2);
      
      \draw  (N21)--(N22);
      \draw  (N22)--(0.5,1.5); 
      \draw  (-0.5,2.5)--(N23);
      \draw  (N23)--(N24);
      \draw[dotted]  (0.2,1.8)--(-0.2,2.2);
      \draw  (N24)--(N25);
      \draw[dotted]  (0.8,6.8)--(1.2,7.2);
      \draw  (N25)--(N26);
      \draw  (N26)--(0.5,6.5);
      \draw  (1.5,7.5)--(N27);
      \draw  (N27)--(1.5,8.5);
      \draw  (N29)--(N210);
      \draw  (N210)--(-0.5,13.5);
      \draw  (0.5,14.5)--(N211);
      \draw  (N211)--(N212);
      \draw[dotted]  (-0.2,13.8)--(0.2,14.2);
      
      \draw  (N11)--(N21);
      \draw  (N12)--(N22);
      \draw  (N13)--(N23);
      \draw  (N13)--(N25);
      \draw  (N14)--(N26);
      \draw  (N15)--(N27);
      \draw  (N17)--(N27);
      \draw  (N19)--(N29);
      \draw  (N110)--(N210);
      \draw  (N111)--(N211);
      \draw  (N112)--(N212);
      \draw[dotted]  (1,9.8)--(1,10.2);
      
      \draw [line width=0.05cm, fill=white] (N11) circle [radius=0.15]; 
      \draw [line width=0.05cm, fill=white] (N12) circle [radius=0.15]; 
      \draw [line width=0.05cm, fill=white] (N13) circle [radius=0.15]; 
      \draw [line width=0.05cm, fill=white] (N14) circle [radius=0.15]; 
      \draw [line width=0.05cm, fill=white] (N15) circle [radius=0.15]; 
      \draw [line width=0.05cm, fill=white] (N16) circle [radius=0.15]; 
      \draw [line width=0.05cm, fill=white] (N17) circle [radius=0.15]; 
      \draw [line width=0.05cm, fill=white] (N19) circle [radius=0.15]; 
      \draw [line width=0.05cm, fill=white] (N110) circle [radius=0.15]; 
      \draw [line width=0.05cm, fill=white] (N111) circle [radius=0.15]; 
      \draw [line width=0.05cm, fill=white] (N112) circle [radius=0.15]; 
      \draw [line width=0.05cm, fill=white] (N21) circle [radius=0.15]; 
      \draw [line width=0.05cm, fill=white] (N22) circle [radius=0.15]; 
      \draw [line width=0.05cm, fill=white] (N23) circle [radius=0.15]; 
      \draw [line width=0.05cm, fill=white] (N24) circle [radius=0.15]; 
      \draw [line width=0.05cm, fill=white] (N25) circle [radius=0.15]; 
      \draw [line width=0.05cm, fill=white] (N26) circle [radius=0.15]; 
      \draw [line width=0.05cm, fill=white] (N27) circle [radius=0.15]; 
      \draw [line width=0.05cm, fill=white] (N29) circle [radius=0.15]; 
      \draw [line width=0.05cm, fill=white] (N210) circle [radius=0.15]; 
      \draw [line width=0.05cm, fill=white] (N211) circle [radius=0.15]; 
      \draw [line width=0.05cm, fill=white] (N212) circle [radius=0.15]; 
      
      \draw[<-] (2.5,0.5)--(1,2);
      \draw[->] (0,3)--(-1.5,4.5);
      \node at (0.5,2.5) {\LARGE $s_1+1$};
      
      \draw[<-] (3.5,8.5)--(2,7);
      \draw[<-] (-1.5,3.5)--(1,6);
      \node at (1.5,6.5) {\LARGE $t_1+1$};
      
      \draw[<-] (-1.5,11.5)--(0,13);
      \draw[<-] (2.5,15.5)--(1,14);
      \node at (0.5,13.5) {\LARGE $t_k+1$};
      
  \end{tikzpicture}
  }}
  \caption{$P(\epsilon L^{s_1}R^{t_1} \cdots L^{s_k}R^{t_k})$}
  \label{fig:main lattice}
\end{figure}
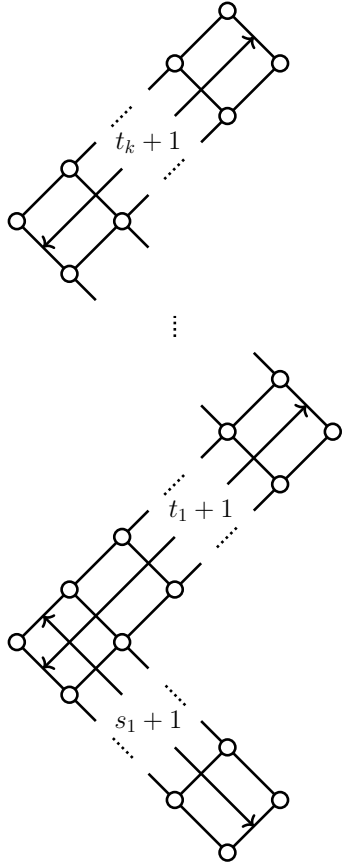

For a poset $P$ and an element $a \in P$, let $D(a) \coloneqq \{b \in P \mid b \lneq a\}$ (resp. $U(a) \coloneqq \{b \in P \mid b \gneq a\}$) denote the \textit{downset} (resp. \textit{upset}) of $a$.
It is known that a poset $P$ is $(2+2)$-free if and only if the set of downsets $\{D(x) \mid x \in P\}$ 
can be linearly ordered by inclusion~\cite{BogartFreePoset}. 
Furthermore, Dukes, Jel\'{i}nek, and Kubitzke ~\cite{DukesVitKubitzkeCompositionmatrix} found a one-to-one correspondence 
between $(2+2)$-free posets and composition matrices. 
Here, we call an upper triangular matrix $M$ a \textit{composition matrix} on the set $[n] = \{1, \ldots, n\}$ if 
the entries of $M$ are subsets of $[n]$, where all the nonempty entries form a partition of $[n]$, 
and there are no rows or columns that contain only empty sets. 

\begin{definition}[{\cite[Definition 1]{DukesVitKubitzkeCompositionmatrix}}]
  For a $(2+2)$-free poset $P$ on $[n]$, let
  $\emptyset = D_0 \subsetneq D_1 \subsetneq \ldots \subsetneq D_m$ be the sequence of downsets of $P$. 
  Let $L_i \coloneqq \{a \in P \mid D(a) = D_i\}$ for all $0 \leq i \leq m$.
  Let $D_{m+1} \coloneqq [n]$ and $K_j \coloneqq D_{j+1} \setminus D_j$ for all $0 \leq j \leq m$.
  We define a matrix $M = \Gamma(P)$ as the $(m+1) \times (m+1)$-matrix over the powerset of $[n]$, 
  where the $(i,j)$-entry $M_{ij}$ of $M$ is given as $L_{i-1} \cap K_{j-1}$ for all $1 \leq i,j \leq m+1$. 
\end{definition}

It is proved in~\cite[Theorem 9]{DukesVitKubitzkeCompositionmatrix} that 
$\Gamma$ is a bijection between the set of $(2+2)$-free posets on $[n]$ and the set of composition matrices on $[n]$. 
Moreover, we can obtain information about $P$ from the composition matrix $\Gamma(P)$. 
\begin{lemma}[{\cite[Lemma 8]{DukesVitKubitzkeCompositionmatrix}}]\label{lemma:information from composition matrix}
  Assume that a $(2+2)$-free poset $P$ corresponds to a composition matrix $M=\Gamma(P)$ which is a $(m+1) \times (m+1)$-matrix. 
  Let $R_i \subset [n]$ (resp. $C_j \subset [n]$) be the union of the cells in the $i$-th row (resp. the $j$-th column) of $M$. 
  Then the following assertions hold:  
  \begin{itemize}
    \item For $a \in M_{ij}$, we have $D(a) = \bigcup_{k=1}^{i-1}C_k$ and $U(a) = \bigcup_{k=j+1}^{m+1}R_k$.
    \item For two elements $a, b \in P$, we have $D(a) = D(b)$ if and only if $a$ and $b$ appear in the same row of $M$. 
    \item For two elements $a, b \in P$, we have $U(a) = U(b)$ if and only if $a$ and $b$ appear in the same column of $M$. 
  \end{itemize}
\end{lemma}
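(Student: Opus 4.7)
The plan is to verify each of the three assertions directly from the definitions of $L_i$, $K_j$, and $M_{ij}=L_{i-1}\cap K_{j-1}$, using in a crucial way that the downsets of a $(2+2)$-free poset form a chain under inclusion.

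First I would record two preliminary observations. Since $\{L_0,\ldots,L_m\}$ and $\{K_0,\ldots,K_m\}$ are both partitions of $[n]$, the union of the entries in the $i$-th row of $M$ equals $L_{i-1}$, that is $R_i=L_{i-1}$, and similarly $C_j=K_{j-1}$. Moreover, because $K_k=D_{k+1}\setminus D_k$ and $D_0=\emptyset$, I have $\bigcup_{k=0}^{i-2}K_k=D_{i-1}$, hence $\bigcup_{k=1}^{i-1}C_k=D_{i-1}$.

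For the first bullet, take $a\in M_{ij}$. The definition of $L_{i-1}$ immediately gives $D(a)=D_{i-1}=\bigcup_{k=1}^{i-1}C_k$. For $U(a)$, note that $a\in K_{j-1}=D_j\setminus D_{j-1}$; since the downsets form a chain $D_0\subsetneq D_1\subsetneq\cdots\subsetneq D_m$, the downsets that contain $a$ are precisely $D_j,D_{j+1},\ldots,D_m$. Therefore $b\in U(a)$ iff $a\in D(b)$ iff $D(b)\in\{D_j,\ldots,D_m\}$ iff $b\in L_j\cup\cdots\cup L_m=R_{j+1}\cup\cdots\cup R_{m+1}$, establishing the required equality.

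For the second bullet, if $a,b$ lie in the same row $i$ then $D(a)=D(b)=D_{i-1}$. Conversely, if $D(a)=D(b)$ and $a,b$ sit in rows $i_a,i_b$, then $D_{i_a-1}=D_{i_b-1}$ forces $i_a=i_b$ since the $D_i$'s are pairwise distinct. The third bullet is symmetric: the sets $U_j:=\bigcup_{k=j+1}^{m+1}R_k=L_j\cup\cdots\cup L_m$ form a strictly decreasing chain $U_0\supsetneq U_1\supsetneq\cdots\supsetneq U_m$, with strict inclusions because each $L_k$ is nonempty (indeed $D_k$ appears in the list of distinct downsets only because some element realizes it), so column indices are distinguished by their $U$-sets.

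The hard part, and really the only place where the $(2+2)$-free hypothesis is essential, is the equivalence ``$a\in D(b)$ iff $D(b)\supseteq D_j$'' in the computation of $U(a)$; everything else is bookkeeping about the two partitions $\{L_i\}$ and $\{K_j\}$ together with the strict chain of the $D_i$'s.
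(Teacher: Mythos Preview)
The paper does not supply its own proof of this lemma; it is quoted verbatim from \cite[Lemma~8]{DukesVitKubitzkeCompositionmatrix} and used as a black box. Your argument is a correct self-contained verification directly from the definitions, and you have correctly isolated the single place where the $(2+2)$-free hypothesis is actually used, namely that the $D_k$ form a chain so that $a\in D_j\setminus D_{j-1}$ lies in $D_k$ precisely when $k\geq j$.
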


\begin{example}\label{example:example of 2+2-free poset}
  Let $P$ be a $(2+2)$-free poset on $[6]=\{1,\ldots,6\}$ depicted in Figure~\ref{fig:example of 2+2-free poset}.
  Then the sequences of downsets, that of $L_i$'s, and that of $K_j$'s are as follows: 
  \begin{align*}
    &D_0 = \emptyset, \; D_1 = \{1\}, \; D_2 = \{1,2\}, \; D_3 = \{1,2,5\}, \; D_4 = \{1,2,3,5\}, D_5 = [6]; \\
    &L_0 = \{1,5\}, \; L_1 = \{2\}, \; L_2 = \{3\}, \; L_3 = \{6\}, \; L_4 = \{4\}; \\
    &K_0 = \{1\}, \; K_1 = \{2\}, \; K_2 = \{5\}, \; K_3 = \{3\}, \; K_4 = \{4,6\}.
  \end{align*}
  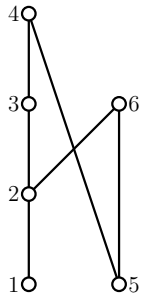
\begin{figure}
  \centering
  {\scalebox{0.6}{
  \begin{tikzpicture}[line width=0.05cm]
  
  \coordinate (a0) at (0,0); 
  \coordinate (a1) at (0,2); 
  \coordinate (a2) at (0,4); 
  \coordinate (a3) at (0,6);
  \coordinate (b0) at (2,0); 
  \coordinate (b1) at (2,4);
  
  \draw  (a0)--(a3);
  \draw  (b0)--(b1);
  \draw  (a3)--(b0); 
  \draw  (a1)--(b1); 

  \draw [line width=0.05cm, fill=white] (a0) circle [radius=0.15] node [left=2pt] {\LARGE $1$};
  \draw [line width=0.05cm, fill=white] (a1) circle [radius=0.15] node [left=2pt] {\LARGE $2$};
  \draw [line width=0.05cm, fill=white] (a2) circle [radius=0.15] node [left=2pt] {\LARGE $3$};
  \draw [line width=0.05cm, fill=white] (a3) circle [radius=0.15] node [left=2pt] {\LARGE $4$};
  \draw [line width=0.05cm, fill=white] (b0) circle [radius=0.15] node [right=2pt] {\LARGE $5$};
  \draw [line width=0.05cm, fill=white] (b1) circle [radius=0.15] node [right=2pt] {\LARGE $6$};
  
  \end{tikzpicture}
  }}
  \caption{An example of a $\{(2+2),(1+1+1)\}$-free poset}
  \label{fig:example of 2+2-free poset}
  \end{figure}
  This poset corresponds to the following composition matrix $M = \Gamma(P)$.
  \begin{equation*}
    M =
    \begin{pmatrix*}
      \{1\} & \emptyset & \{5\} & \emptyset & \emptyset \\
      \emptyset & \{2\} & \emptyset & \emptyset & \emptyset \\
      \emptyset & \emptyset & \emptyset & \{3\} & \emptyset \\
      \emptyset & \emptyset & \emptyset & \emptyset & \{6\} \\
      \emptyset & \emptyset & \emptyset & \emptyset & \{4\}
    \end{pmatrix*}
  \end{equation*}
\end{example}

The poset $P$ described in Example~\ref{example:example of 2+2-free poset}
is a $\{(2+2),(1+1+1)\}$-free poset on $[6]$ which is irreducible with respect to ordinal sum. 
The size of the composition matrix $M = \Gamma(P)$ is $5 = 6 - 1$. 
Furthermore, each row contains exactly one singleton except for the first row, where the union of its entries forms a set of two elements.
Similarly, each column contains exactly one singleton except for the last column, where the union of its entries forms a set of two elements. 
These observations hold for any $\{(2+2),(1+1+1)\}$-free poset that is irreducible with respect to ordinal sum. 

In what follows, we omit the case where $P$ is a singleton. 
\begin{lemma}\label{lemma:downsets of (2+2) and (1+1+1)-free poset}
  Let $P$ be a $\{(2+2),(1+1+1)\}$-free poset on $[n]$ that is irreducible with respect to ordinal sum. 
  Then the following assertions hold.
  \begin{itemize}
    \item The number of downsets, except for $\emptyset$ and $P$, is $n-2$.
    \item $L_0$ consists of two elements, while each $L_i$ for $1 \leq i \leq n-2$ is a singleton.
    \item $K_{n-2}$ consists of two elements, while each $K_j$ for $0 \leq i \leq n-3$ is a singleton.
  \end{itemize}
\end{lemma}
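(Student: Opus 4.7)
The plan is to analyze the composition matrix $M = \Gamma(P)$ and show that the hypotheses force every row other than the first, and every column other than the last, to be a singleton. Since $P$ is $(1+1+1)$-free, its width is at most two, so $|L_i|, |K_j| \leq 2$ for every $i, j$.

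First I would settle the boundary sizes. If $|L_0|$ were at most one, then $P$ would have a unique minimum $a_0$, and then $P = \{a_0\} \oplus (P \setminus \{a_0\})$ is a non-trivial ordinal sum, contradicting irreducibility; combined with the width bound this forces $|L_0| = 2$. The dual argument applied to maximal elements gives $|K_m| = 2$.

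The core step is to show $|L_i| = 1$ for every $i \geq 1$. Suppose for contradiction that $|L_i| = 2$ with $i \geq 1$, and write $L_i = \{a, b\}$; then $D(a) = D(b) = D_i \neq \emptyset$. I would establish the non-trivial ordinal-sum decomposition $P = D_i \oplus (P \setminus D_i)$ by verifying that $c < e$ for every $c \in D_i$ and every $e \in P \setminus D_i$. Since $e \notin D(a) = D_i$, the element $e$ is either equal to $a$, lies above $a$, or is incomparable to $a$. The first two possibilities give $c < a \leq e$ by transitivity. For the incomparable case, if $e$ were also incomparable to $b$, then $\{a, b, e\}$ would be a three-element antichain, violating $(1+1+1)$-freeness; hence $e$ is comparable to $b$. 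Now $e < b$ would put $e$ in $D(b) = D_i$, contradicting $e \in P \setminus D_i$, so $e = b$ or $e > b$, and in either case $c < b \leq e$ by transitivity using $c \in D(b)$. The decomposition is non-trivial because $D_i \neq \emptyset$ and $\{a,b\} \subseteq P \setminus D_i$, contradicting irreducibility; therefore $|L_i| = 1$ for $i \geq 1$. The analogous statement $|K_j| = 1$ for $j < m$ follows by the parallel argument with $D_i$ replaced by the common upset $U(u) = U(v)$ of the two elements of $K_j$, noting that this upset is nonempty precisely when $j < m$.

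The three bullets then follow at once: $n = \sum_{i=0}^m |L_i| = 2 + m$ gives $m = n - 2$, so the chain $D_0 \subsetneq D_1 \subsetneq \cdots \subsetneq D_m$ has $n - 1$ terms, of which $n - 2$ are distinct from $\emptyset$ (and no $D_i$ equals $P$, since $a \notin D(a)$ for any $a$); simultaneously, $K_{n-2} = K_m$ has size two while the earlier $K_j$ are singletons. I expect the core case analysis to be the main obstacle, especially the sub-case where $e$ is incomparable to $a$: it is precisely there that the $(1+1+1)$-free hypothesis is used in tandem with the coincidence $D(a) = D(b)$.
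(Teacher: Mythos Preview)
Your argument is correct and follows essentially the same line as the paper: both proofs show that two distinct elements sharing a nonempty common downset force a nontrivial ordinal-sum decomposition $P = D_i \oplus (P\setminus D_i)$, and then count. The only cosmetic difference is that the paper first invokes the width-two chain decomposition $P=\{a_0\lessdot\cdots\lessdot a_s\}\sqcup\{b_0\lessdot\cdots\lessdot b_t\}$ (Dilworth) to name the two elements and read off the decomposition, whereas you bypass this and verify $c<e$ for all $c\in D_i$, $e\notin D_i$ by a direct case analysis using $(1+1+1)$-freeness; your route is marginally more self-contained but otherwise the proofs coincide.
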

\begin{proof}
  Let $M = \Gamma(P)$ be the composition matrix corresponding to $P$.
  We show the following: 
  \begin{itemize}
    \item $M$ is a $(n-1) \times (n-1)$-matrix.
    \item Each row contains exactly one singleton except for the first row,
    where the union of its entries $R_1$ consists of two elements.
    \item Each column contains exactly one singleton except for the last column,
    where the union of its entries $C_{n-1}$ consists of two elements.
  \end{itemize}
  By the construction of $M$, the first claim implies that the number of downsets is $n-2$. 
  Furthermore, the second and third claims imply the corresponding claims on $P$, such as $R_i =L_{i-1}, C_j = K_{j-1}$. 
  These follow from the fact that $M_{ij} = L_{i-1} \cap K_{j-1}$ and the $L_i$'s and $K_j$'s form partitions of $[n]$. 

  Since $P$ is $(1+1+1)$-free, irreducible with respect to ordinal sum, and is not a singleton, the width of $P$ is $2$. 
  Therefore, $P$ can be expressed as the disjoint union of two chains \begin{equation*}
    P = \{a_0 \lessdot a_1 \lessdot \cdots \lessdot a_s\} \sqcup \{b_0 \lessdot b_1 \lessdot \cdots \lessdot b_t\},
  \end{equation*}
  where $s+t+2 = n$. 

  If $D(a) = D(b)$, then $a$ and $b$ are incomparable, 
  so we can assume $a = a_i$ for some $i$ and $b = b_j$ for some $j$. 
  Thus, $P$ can be written as an ordinal sum of $D(a_i) = D(b_j) = \{a_0, \ldots, a_{i-1}, b_0, \ldots, b_{j-1}\}$ 
  and $\{a_i, \ldots, a_s, b_j, \ldots, b_t\}$, a contradiction unless the case with $a_i = a_0$ and $b_j = b_0$, i.e., $D(a_0) = D(b_0) = \emptyset$. 
  Similarly, if $U(a) = U(b)$, then $\{a, b\}$ must be equal to $\{a_s, b_t\}$ and $U(a_s) = U(b_t) = \emptyset$. 

  Since the downsets $D(p)$ are distinct for each $p \in P$ except for $D(a_0) = D(b_0) = \emptyset$, 
  the number of downsets is $n-2$. Thus, $M$ is a $(n-1) \times (n-1)$-matrix. 
  By the definition of composition matrices, each row contains exactly one singleton except for exactly one row. 
  For the exceptional row, the union of its entries is $\{a_0, b_0\}$ by the second claim of Lemma~\ref{lemma:information from composition matrix}.
  Moreover, since $D(a_0)=D(b_0)=\emptyset$, we identify the exceptional row as the first row of $M$
  by the first claim of Lemma~\ref{lemma:information from composition matrix}.
  Similarly, each column contains exactly one singleton except for the
  last column, where the union of its entries is $C_{n-1} = \{a_s, b_t\}$.
\end{proof}

\begin{example}
  For the poset described in Example~\ref{example:example of 2+2-free poset},
  the sequence of downsets of $P$ is given by
  \begin{equation*}
    \emptyset = D(1) = D(5) \subsetneq D(2) \subsetneq D(3) \subsetneq D(6) \subsetneq D(4) \subsetneq [6].
  \end{equation*}
  where each difference is a singleton, except for $[6] \setminus D(4) = \{4,6\}$.
\end{example}

For $s, t \geq 0$, let
\begin{equation*}
  P = \{a_0 \lessdot a_1 \lessdot \cdots \lessdot a_s\} \sqcup \{b_0 \lessdot b_1 \lessdot \cdots \lessdot b_t\}
\end{equation*}
be a  $\{(2+2),(1+1+1)\}$-free poset that is irreducible with respect to ordinal sum.
There are four possible shapes of $\{(2+2),(1+1+1)\}$-free posets 
that is irreducible with respect to ordinal sum.
These correspond to the following four cases:
\begin{itemize}
  \item The smallest nonempty downset is $D(a_1)$, and the largest proper downset is $D(b_t)$,
  \item The smallest nonempty downset is $D(a_1)$, and the largest proper downset is $D(a_s)$,
  \item The smallest nonempty downset is $D(b_1)$, and the largest proper downset is $D(b_t)$,
  \item The smallest nonempty downset is $D(b_1)$, and the largest proper downset is $D(a_s)$.
\end{itemize}
For example, if $P$ is in the first case, there exist positive integers
$s_1, \ldots, s_k, t_1, \ldots, t_k$ with $k \geq 1$
such that the sequence of its downsets is 
\begin{align*}
  \emptyset = D(a_0) = D(b_0) &\subsetneq D(a_1) \subsetneq \ldots \subsetneq D(a_{s_1}) \\
  &\subsetneq D(b_1) \subsetneq \ldots \subsetneq D(b_{t_1}) \\
  & \hspace{55pt} \vdots \\
  &\subsetneq D(b_{t_1+\cdots+t_{k-1}+1}) \subsetneq \ldots \subsetneq D(b_{t_1+\cdots+t_k}) \subsetneq P
\end{align*}
and each difference is a singleton, except for $P \setminus D(b_{t_1+\cdots+t_k}) = \{a_s, b_t\}$.
Then the Hasse diagram of $P$ becomes like as depicted in Figure~\ref{fig:main poset}.
We omit the description of the other cases.
\begin{figure}[ht]
  \centering
  {\scalebox{0.70}{
  \begin{tikzpicture}[line width=0.05cm]
    \coordinate (N11) at (2,0); 
    \coordinate (N12) at (2,3); 
    \coordinate (N13) at (2,5); 
    \coordinate (N14) at (2,7); 
    \coordinate (N15) at (2,9);
    \coordinate (N16) at (2,12); 
    \coordinate (N17) at (2,14);
    
    \coordinate (N21) at (0,0); 
    \coordinate (N22) at (0,2); 
    \coordinate (N23) at (0,4); 
    \coordinate (N24) at (0,6); 
    \coordinate (N25) at (0,8);
    \coordinate (N26) at (0,11); 
    \coordinate (N27) at (0,14);
    
    \draw  (N11)--(N12); 
    \draw  (N12)--(2,3.5); 
    \draw  (2,4.5)--(N13); 
    \draw[dotted]  (2,3.8)--(2,4.2);
    \draw  (N13)--(N14);
    \draw  (N14)--(N15);
    \draw  (N15)--(2,9.5);
    \draw  (2,10.5)--(N16);
    \draw  (N16)--(2,12.5);
    \draw  (2,13.5)--(N17);
    \draw[dotted]  (2,12.8)--(2,13.2);
    
    \draw  (N21)--(0,0.5);
    \draw  (0,1.5)--(N22); 
    \draw  (N22)--(N23);
    \draw  (N23)--(N24);
    \draw[dotted]  (0,0.8)--(0,1.2);
    \draw  (N24)--(0,6.5);
    \draw[dotted]  (0,6.8)--(0,7.2);
    \draw  (0,7.5)--(N25);
    \draw  (N25)--(0,9.5);
    \draw  (0,10.5)--(N26);
    \draw  (N26)--(N27);
    
    \draw  (N12)--(N22);
    \draw  (N13)--(N24);
    \draw  (N15)--(N25);
    \draw  (N16)--(N26);
    \draw[dotted]  (1,9.8)--(1,10.2);
    
    \draw [line width=0.05cm, fill=white] (N11) circle [radius=0.15] node[right=2pt] {\LARGE $b_0$};
    \draw [line width=0.05cm, fill=white] (N12) circle [radius=0.15] node[right=2pt] {\LARGE $b_1$};
    \draw [line width=0.05cm, fill=white] (N13) circle [radius=0.15] node[right=2pt] {\LARGE $b_{t_1-1}$};
    \draw [line width=0.05cm, fill=white] (N14) circle [radius=0.15] node[right=2pt] {\LARGE $b_{t_1}$};
    \draw [line width=0.05cm, fill=white] (N15) circle [radius=0.15] node[right=2pt] {\LARGE $b_{t_1+1}$};
    \draw [line width=0.05cm, fill=white] (N16) circle [radius=0.15] node[right=2pt] {\LARGE $b_{t_1 + \cdots + t_{k-1}+1}$};
    \draw [line width=0.05cm, fill=white] (N17) circle [radius=0.15] node[right=2pt] {\LARGE $b_{t_1 + \cdots + t_k}$};
    \draw [line width=0.05cm, fill=white] (N21) circle [radius=0.15] node[left=2pt] {\LARGE $a_0$};
    \draw [line width=0.05cm, fill=white] (N22) circle [radius=0.15] node[left=2pt] {\LARGE $a_{s_1-1}$};
    \draw [line width=0.05cm, fill=white] (N23) circle [radius=0.15] node[left=2pt] {\LARGE $a_{s_1}$};
    \draw [line width=0.05cm, fill=white] (N24) circle [radius=0.15] node[left=2pt] {\LARGE $a_{s_1+1}$};
    \draw [line width=0.05cm, fill=white] (N25) circle [radius=0.15] node[left=2pt] {\LARGE $a_{s_1+s_2-1}$}; 
    \draw [line width=0.05cm, fill=white] (N26) circle [radius=0.15] node[left=2pt] {\LARGE $a_{s_1 + \cdots + s_k-1}$}; 
    \draw [line width=0.05cm, fill=white] (N27) circle [radius=0.15] node[left=2pt] {\LARGE $a_{s_1 + \cdots + s_k}$};
    
    \node at (0,15) {$\;$}; 
    \node at (0,-1) {$\;$};    
  \end{tikzpicture}
  }}
  \caption{A structure of a $\{(2+2),(1+1+1)\}$-free poset}
  \label{fig:main poset}
\end{figure}
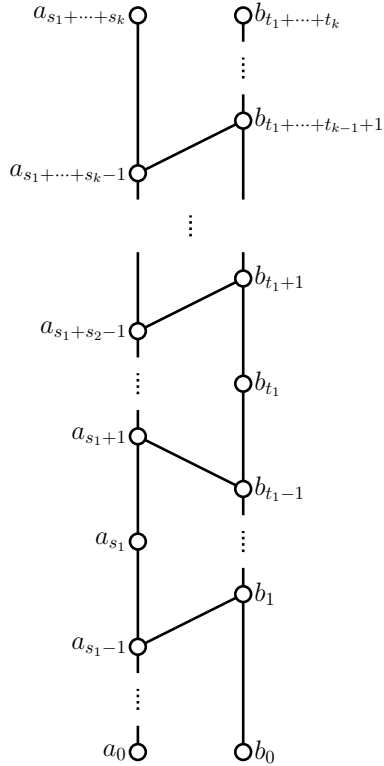

\begin{proposition}\label{proposition:correspondence free poset and generalized snake poset}
  Let $P$ be a poset and $L(P)$ the distributive lattice corresponding to $P$. 
  Then the following conditions are equivalent:
  \begin{enumerate}
    \item[{\em (2)}] $P$ is a $\{(2+2),(1+1+1)\}$-free poset that is irreducible with respect to ordinal sum.  
    \item[{\em (3)}] $L(P)$ is a generalized snake poset. 
  \end{enumerate}
\end{proposition}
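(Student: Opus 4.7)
The plan is to verify both directions using the explicit structural data already assembled in Section~\ref{section:generalized snake posets and (2+2) and (1+1+1)-free posets}. On the poset side, Lemma~\ref{lemma:downsets of (2+2) and (1+1+1)-free poset} together with the four-case analysis culminating in Figure~\ref{fig:main poset} provides a complete classification of $\{(2+2),(1+1+1)\}$-free posets irreducible with respect to ordinal sum, parametrized by four ``shapes'' and integer tuples $(s_1,\ldots,s_k,t_1,\ldots,t_k)$. On the lattice side, Definition~\ref{def:generalized snake poset} builds generalized snake posets from words whose successive $L$/$R$ blocks are encoded by the same sort of integer tuples. The strategy is to set up an explicit matching of these parameters and to verify that the resulting $L(P)$ coincides with the corresponding $P(\wb)$ and conversely.

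For $(2) \Rightarrow (3)$, fix $P$ as in Figure~\ref{fig:main poset} (the remaining three shapes are symmetric, handled by the appropriate choice of initial/terminal letter). Since $P$ has width two by Lemma~\ref{lemma:downsets of (2+2) and (1+1+1)-free poset}, a poset ideal $\alpha \in L(P)$ is uniquely determined by the pair $\bigl(|\alpha \cap A|,\ |\alpha \cap B|\bigr)$, where $A$ and $B$ are the two chains $\{a_0\lessdot\cdots\lessdot a_s\}$ and $\{b_0\lessdot\cdots\lessdot b_t\}$, subject to the admissibility constraints imposed by each cross cover-relation of $P$. Enumerating these admissible pairs and reading off their cover relations gives a Hasse diagram that matches, vertex-for-vertex and edge-for-edge, the diagram of $P\bigl(\epsilon L^{s_1}R^{t_1}\cdots L^{s_k}R^{t_k}\bigr)$ in Figure~\ref{fig:main lattice}. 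The $L$'s come from segments of the zigzag in which successive admissible pairs increase the $A$-coordinate, and the $R$'s from segments that increase the $B$-coordinate; a straight lookup shows the block lengths agree with $(s_1,\ldots,s_k,t_1,\ldots,t_k)$.

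For $(3) \Rightarrow (2)$, start from a generalized snake poset $L = P(\wb)$ and recover the underlying $P$ by Birkhoff's representation theorem as the subposet of join-irreducible elements of $L$, i.e.\ those with a unique lower cover. An induction on the length of $\wb$, driven by the recursive step in Definition~\ref{def:generalized snake poset}, shows that these join-irreducibles are precisely the elements lying on the two ``outer rails'' of the snake, naturally partitioned into two chains whose cross cover-relations are dictated by the letter sequence of $\wb$. This is exactly the shape of Figure~\ref{fig:main poset} (with one of four orientations determined by the first and last letters of $\wb$), so $P$ has width two, making it $(1+1+1)$-free, and its downsets form a linear chain with a single doubling at the top, making it $(2+2)$-free by the Bogart characterization cited after Example~\ref{example:small examples of generalized snake poset}. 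Irreducibility is immediate from the construction of $\wb$: neither the minimum nor the maximum of one chain is comparable to every element of the other, which blocks any expression of $P$ as an ordinal sum.

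The main technical obstacle is bookkeeping. The four-case split on both sides must be aligned (first and last letter of $\wb$ versus location of the smallest nonempty and largest proper downset), and the bijection between poset ideals of $P$ and vertices of $P(\wb)$ must carry cover relations to cover relations across each ``bend'' of the snake, corresponding to the junction $R^{t_i}L^{s_{i+1}}$ or $L^{s_i}R^{t_i}$. Once the parameter dictionary is set, the combinatorial checks reduce to inspection, but writing them out uniformly for all four shapes is the bulk of the work.
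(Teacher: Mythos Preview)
Your plan is sound, and both directions would go through as outlined. The comparison with the paper is worth making explicit.

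For $(2)\Rightarrow(3)$, you and the paper do essentially the same thing. You propose a direct parameter match between the tuple $(s_1,\ldots,s_k,t_1,\ldots,t_k)$ encoding $P$ and the block decomposition of the snake word $\wb$; the paper instead runs an induction on $|P|$, adding one new element $a_{s+1}$ or $b_{t+1}$ at a time and checking that $L(P')$ extends $L(P)$ by exactly the two new ideals and cover relations prescribed by Definition~\ref{def:generalized snake poset}. The inductive phrasing saves the four-case split and the bend-by-bend bookkeeping you flag as the main obstacle, since each inductive step is a single local calculation near the top of the lattice.

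For $(3)\Rightarrow(2)$, your route is genuinely different. You reconstruct $P$ as the poset of join-irreducibles of $P(\wb)$, argue inductively that these sit on the two outer rails, and then read off width~$2$, linearly ordered downsets, and irreducibility. The paper instead argues the contrapositive in three lines: if $P$ contains $(2+2)$ or $(1+1+1)$ then $L(P)$ contains a sublattice of width~$3$, while every generalized snake poset has width~$2$; and if $P$ is a nontrivial ordinal sum then $L(P)$ has an interior element comparable to all others, which no generalized snake poset has. Your constructive argument yields more---an explicit description of $P$ in terms of $\wb$---but the paper's contrapositive is dramatically shorter and avoids any induction on this side. If brevity is the goal, the contrapositive is the better choice; if you want the explicit dictionary between $\wb$ and the cross-relations in $P$, your approach delivers it.
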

\begin{proof}
  \textbf{(2) $\implies$ (3)}:
  For $s, t \in \intnonneg$, let
  \begin{equation*}
    P = \{a_0 \lessdot a_1 \lessdot \cdots \lessdot a_s\} \sqcup \{b_0 \lessdot b_1 \lessdot \cdots \lessdot b_t\}
  \end{equation*}
  be a $\{(2+2),(1+1+1)\}$-free poset that is irreducible with respect to ordinal sum.

  If $s = t = 0$, then $P = \{a_0, b_0\}$ and they are incomparable, so $L(P)$ is a boolean lattice of rank 2. 
  In particular, $L(P)$ consists of the four elements: 
  $\alpha_0 = \emptyset, \alpha_1 = \{a_0\}, \alpha_2 = \{b_0\}$,
  and $\alpha_3 = \{a_0, b_0\}$. It is the generalized snake poset $P(\epsilon)$.

  Now, we show that $L(P)$ is the generalized snake poset $P(w_0w_1 \cdots w_l)$
  by induction on $l = s + t \geq 1$.

  If $s=1, t=0$, then $P$ is a union of $\{a_0, b_0\}$ and a new element $\{a_1\}$.
  Furthermore, $L(P)$ is a union of $\{\alpha_0, \alpha_1, \alpha_2, \alpha_3\}$ and
  $\{\alpha_4=\{a_0, a_1\}, \alpha_5=\{a_0, a_1, b_0\}\}$.
  In this case, the largest downset of $P$ is $D(a_1)$.
  Therefore, $\alpha_4$ and $\alpha_5$ are expressed as $D(a_1) \cup \{a_1\} = P \setminus \{b_0\}$
  and $P$, respectively.
  There are inclusions $\alpha_3 \subset \alpha_5, \alpha_4 \subset \alpha_5$,
  and $\alpha_1 \subset \alpha_4$.
  They are cover relations in $L(P)$ because the differences of the sets are singletons.
  Thus, $L(P)$ is the generalized snake poset $P(\epsilon L)$.

  Similarly, if $s=0, t=1$, then $P$ is a union of $\{a_0, b_0\}$ and a new element $\{b_1\}$.
  In this case, the largest downset of $P$ is $D(b_1)$.
  Furthermore, $L(P)$ is a union of $\{\alpha_0, \alpha_1, \alpha_2, \alpha_3\}$
  and $\{\alpha_4=D(b_1) \cup \{b_1\} = P \setminus \{a_0\}, \alpha_5=P\}$.
  Thus, $L(P)$ is the generalized snake poset $P(\epsilon R)$.

  For $l = s+t \geq 1$, we assume $L(P) = P(w_0w_1 \cdots w_l)$ with $w_l = L$.
  Also, we assume that the largest downset of $P$ is $D(a_s)$ and elements
  of $L(P)$ are labeled with 
  $\alpha_{2l+1} = P \setminus \{a_s\}, \alpha_{2l+2} = D(a_s) \cup \{a_s\} = P \setminus \{b_t\}$
  and $\alpha_{2l+3} = P$.
  If we add $a_{s+1}$ to $P$ and $P' \coloneqq P \cup \{a_{s+1}\}$
  is a $\{(2+2),(1+1+1)\}$-free poset that is irreducible with respect to ordinal sum, 
  then the largest downset of $P'$ becomes $D(a_{s+1})$. 
  Furthermore, we have $D(a_s) \subset D(a_{s+1}) \subset P'$ and their differences are
  $D(a_{s+1}) \setminus D(a_s) = \{a_s\}, P' \setminus D(a_{s+1}) = \{a_{s+1}, b_t\}$.
  These facts are based on Lemma~\ref{lemma:downsets of (2+2) and (1+1+1)-free poset}.
  Then $L(P')$ is a union of $L(P)$ and $\{\alpha_{2l+4} = D(a_{s+1}) \cup \{a_{s+1}\} = P' \setminus \{b_t\}, \alpha_{2l+5} = P'\}$. 
  There are inclusions $\alpha_{2l+3} \subset \alpha_{2l+5}, \alpha_{2l+4} \subset \alpha_{2l+5}$, and $\alpha_{2l+2} \subset \alpha_{2l+4}$, 
  which are cover relations in $L(P)$ because the differences of the sets are singletons. 
  Thus, $L(P')$ is the generalized snake poset $P(w_0 w_1 \cdots w_lw_{l+1})$
  with $w_lw_{l+1} = LL$.
  On the other case that we add $b_{t+1}$ to $P$ and $P' \coloneqq P \cup \{b_{t+1}\}$ is a $\{(2+2),(1+1+1)\}$-free poset 
  that is irreducible with respect to ordinal sum, the largest downset of $P'$ becomes $D(b_{t+1})$. 
  Furthermore, we have $D(a_s) \subset D(b_{t+1}) \subset P'$ and their differences are 
  $D(b_{t+1}) \setminus D(a_s) = \{b_t\}, P' \setminus D(b_{t+1}) = \{a_s, b_{t+1}\}$. 
  Then $L(P')$ is a union of $L(P)$ and $\{\alpha_{2l+4} = D(b_{t+1}) \cup \{b_{t+1}\} = P' \setminus \{a_s\}, \alpha_{2l+5} = P'\}$. 
  There are inclusions $\alpha_{2l+3} \subset \alpha_{2l+5}, \alpha_{2l+4} \subset \alpha_{2l+5}$, and $\alpha_{2l+1} \subset \alpha_{2l+4}$, 
  which are cover relations in $L(P)$ because the differences of the sets are singletons. 
  Thus, $L(P')$ is the generalized snake poset $P(w_0 w_1 \cdots w_lw_{l+1})$ with $w_lw_{l+1} = LR$.

  Similarly, we can prove the case that the largest downset of $P$ is $D(b_t)$.
  Thus, $L(P)$ is a generalized snake poset for all $s, t \in \intnonneg$.

\smallskip

\noindent\textbf{(3) $\implies$ (2)}:
  If $P$ contains a poset depicted in either
  Figure~\ref{fig:forbidden subposet 1} or Figure~\ref{fig:forbidden subposet 2}
  as a subposet,
  then $L(P)$ contains a boolean lattice of rank 3 or a divisor lattice of 36 as a sublattice. 
  The width of each of the two lattices is 3, whereas the width of generalized snake posets is 2.
  Therefore, $L(P)$ is not a generalized snake poset. 
  Moreover, if $P$ can be expressed as the ordinal sum of two nonempty subposets $P_1$ and $P_2$, 
  then $L(P)$ is the union of $L(P_1)$ and $L(P_2)$ such that 
  the minimal element of $L(P_1)$ is equal to the maximal element of $L(P_2)$. 
  Therefore, there is an element of $L(P)$ other than maximal and minimal elements that is comparable to every other element of $L(P)$. 
  However, generalized snake posets never contain such element. 
\end{proof}

\begin{remark}
  The poset described in Example~\ref{example:example of 2+2-free poset} corresponds to
  the distributive lattice described in Example~\ref{example:small examples of generalized snake poset}.
  The poset depicted in Figure~\ref{fig:main poset} corresponds to the distributive lattice
  depicted in Figure~\ref{fig:main lattice}.
\end{remark}

Now, we are ready to give a proof of Theorem~\ref{theorem:main theorem}. 
Since we already know $(2) \iff (3)$ by Proposition~\ref{proposition:correspondence free poset and generalized snake poset},
it suffices to prove $(1) \implies (2)$ and $(3) \implies (1)$. 

\noindent\textbf{(1) $\implies$ (2)}:
  We prove the contraposition. If $P$ contains a poset depicted in either
  Figure~\ref{fig:forbidden subposet 1} or Figure~\ref{fig:forbidden subposet 2} as a subposet,
  then $L(P)$ contains a divisor lattice of 36 or a boolean lattice of rank 3 as a sublattice. 
  As seen in Examples~\ref{example:dvisor lattice of 36} and \ref{example:boolean lattice of rank 3}, 
  the set of polynomials arising from either of these lattices does not form a Khovanskii basis. 
  Thus, by Lemma~\ref{lemma:main lemma}, $\calF_{L(P)}$ is not a Khovanskii basis of $\calR(L(P))$. 

\noindent\textbf{(3) $\implies$ (1)}:
  If $L(P)$ is a generalized snake poset, since $L(P)$ is a disjoint union of two chains, the maximal element and the minimal element, 
  the co-comparability graph $G_{L(P)}$ becomes a bipartite graph. 
  Then we apply Lemma~\ref{lemma:main lemma} to $L(P)$. By Lemma~\ref{corollary:toric ideal of bipartite graph},
  every primitive even closed walk of $G_{L(P)}$ is an even cycle of $G_{L(P)}$. 
  Let $C_1, \ldots, C_t$ be even cycles of $G_{L(P)}$, 
  and $L_i$ the minimal sublattice of $L(P)$ containing $\{\alpha_i \mid i \in V(C_i)\}$ for each $i$. 
  It is sufficient to prove that $\calF_{L_i}$ forms a Khovanskii basis of $\calR(L_i)$ for each $i$ (see Lemma~\ref{lemma:main lemma}). 
  To this end, we may prove that each $L_i$ is a divisor lattice of $2 \cdot 3^m$ for some $m$  
  since we already know that the set of polynomials arising from these lattices forms a Khovanskii basis by Example~\ref{example:divisor lattice of 2*3^m}. 

  We pay attention to a corner of generalized snake poset as depicted in Figure~\ref{fig:A corner of generalized snake poset}.
  We observe that if we delete $\alpha$, then the co-comparability graph is separated into two disconnected graphs, 
  consisting of the vertices corresponding to elements larger than $\beta$ and smaller than $\beta$. 
  We denote them by $G_1$ and $G_2$, respectively. In $G_{L(P)}$, $G_1$ and $G_2$ are connected through a unique vertex $\alpha$. 
  Thus, there is no cycle that includes vertices from both $G_1$ and $G_2$. 
  Therefore, each $L_i$ is a divisor lattice of $2 \cdot 3^m$ for some $m$, as required. 

  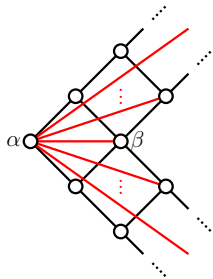
\begin{figure}[ht]
    \centering
    {\scalebox{0.6}{
      \begin{tikzpicture}[line width=0.05cm]

        \coordinate (N1) at (-1,1);  
        \coordinate (N2) at (0,2);   
        \coordinate (N3) at (-2,2);  
        \coordinate (N4) at (-1,3);  
        \coordinate (N5) at (-3,3);  
        \coordinate (N6) at (-2,4);  
        \coordinate (N7) at (0,4);   
        \coordinate (N8) at (-1,5);  
      
        \draw (N1)--(N2); \draw (N1)--(N3); \draw (N2)--(N4); \draw (N3)--(N4); 
        \draw (N3)--(N5); \draw (N5)--(N6); \draw (N4)--(N6); 
        \draw (N4)--(N7); \draw (N7)--(N8); \draw (N6)--(N8); 
    
        \draw (N8)--(-0.5, 5.5); \draw [dotted] (-0.3, 5.7)--(0,6);
        \draw (N7)--(0.5, 4.5); \draw [dotted] (0.7, 4.7)--(1,5);
        \draw (N1)--(-0.5, 0.5); \draw [dotted] (-0.3, 0.3)--(0,0);
        \draw (N2)--(0.5, 1.5); \draw [dotted] (0.7, 1.3)--(1,1);
    
        \draw [red] (N5)--(N2); \draw [red] (N5)--(N4); \draw [red] (N5)--(N7);
        \draw [red] (N5)--(0.5, 0.5); \draw [red] (N5)--(0.5, 5.5);
        \node [red] at (-1, 2.1) {\LARGE $\vdots$}; \node [red] at (-1, 4.1) {\LARGE $\vdots$};

        \draw [line width=0.05cm, fill=white] (N1) circle [radius=0.15];
        \draw [line width=0.05cm, fill=white] (N2) circle [radius=0.15];
        \draw [line width=0.05cm, fill=white] (N3) circle [radius=0.15];
        \draw [line width=0.05cm, fill=white] (N4) circle [radius=0.15] node [right=2pt] {\LARGE $\beta$};
        \draw [line width=0.05cm, fill=white] (N5) circle [radius=0.15] node [left=2pt] {\LARGE $\alpha$};
        \draw [line width=0.05cm, fill=white] (N6) circle [radius=0.15];
        \draw [line width=0.05cm, fill=white] (N7) circle [radius=0.15];
        \draw [line width=0.05cm, fill=white] (N8) circle [radius=0.15];
    
    \end{tikzpicture}
    }}
    \caption{A corner of generalized snake poset}
    \label{fig:A corner of generalized snake poset}
  \end{figure}

\bibliography{references}

\providecommand{\bysame}{\leavevmode\hbox to3em{\hrulefill}\thinspace}
\providecommand{\MR}{\relax\ifhmode\unskip\space\fi MR }
\providecommand{\MRhref}[2]{%
  \href{http://www.ams.org/mathscinet-getitem?mr=#1}{#2}
}
\providecommand{\href}[2]{#2}
\begin{thebibliography}{10}

\bibitem{Birkhoff1967}
Garrett Birkhoff, \emph{Lattice theory}, third ed., American Mathematical Society Colloquium Publications, vol. Vol. XXV, American Mathematical Society, Providence, RI, 1967. \MR{227053}

\bibitem{BogartFreePoset}
Kenneth~P. Bogart, \emph{An obvious proof of {F}ishburn's interval order theorem}, Discrete Math. \textbf{118} (1993), no.~1-3, 239--242. \MR{1230066}

\bibitem{DukesVitKubitzkeCompositionmatrix}
Mark Dukes, V\'it Jel\'inek, and Martina Kubitzke, \emph{Composition matrices, {$({\bf 2}+{\bf 2})$}-free posets and their specializations}, Electron. J. Combin. \textbf{18} (2011), no.~1, Paper 44, 9. \MR{2776820}

\bibitem{FishburnIntervalOrder}
Peter~C. Fishburn, \emph{Interval orders and interval graphs}, Wiley-Interscience Series in Discrete Mathematics, John Wiley \& Sons, Ltd., Chichester, 1985, A study of partially ordered sets, A Wiley-Interscience Publication. \MR{776781}

\bibitem{BinomialIdeals}
J\"urgen Herzog, Takayuki Hibi, and Hidefumi Ohsugi, \emph{Binomial ideals}, Graduate Texts in Mathematics, vol. 279, Springer, Cham, 2018.

\bibitem{KhovanskiiBasis}
Kiumars Kaveh and Christopher Manon, \emph{Khovanskii bases, higher rank valuations, and tropical geometry}, SIAM J. Appl. Algebra Geom. \textbf{3} (2019), no.~2, 292--336. \MR{3949692}

\bibitem{lee2024generalizedsnakeposetsorder}
Eon Lee, Andr{\'e}s~R Vindas-Mel{\'e}ndez, and Zhi Wang, \emph{Generalized snake posets, order polytopes, and lattice-point enumeration}, arXiv preprint arXiv:2411.18695 (2024).

\bibitem{MillerSturmfels}
Ezra Miller and Bernd Sturmfels, \emph{Combinatorial commutative algebra}, Graduate Texts in Mathematics, vol. 227, Springer-Verlag, New York, 2005. \MR{2110098}

\bibitem{OhsugiHibi1999}
Hidefumi Ohsugi and Takayuki Hibi, \emph{Toric ideals generated by quadratic binomials}, J. Algebra \textbf{218} (1999), no.~2, 509--527. \MR{1705794}

\bibitem{RobbianoSubalgebra}
Lorenzo Robbiano and Moss Sweedler, \emph{Subalgebra bases}, Commutative algebra ({S}alvador, 1988), Lecture Notes in Math., vol. 1430, Springer, Berlin, 1990, pp.~61--87.

\bibitem{SturmfelsLectureNote}
Bernd Sturmfels, \emph{Gr\"obner bases and convex polytopes}, University Lecture Series, vol.~8, American Mathematical Society, Providence, RI, 1996.

\bibitem{generalizedsnakeposet1}
Matias von Bell, Benjamin Braun, Derek Hanely, Khrystyna Serhiyenko, Julianne Vega, Andr\'es~R. Vindas-Mel\'endez, and Martha Yip, \emph{Triangulations, order polytopes, and generalized snake posets}, Comb. Theory \textbf{2} (2022), no.~3, Paper No. 10, 34. \MR{4498591}

\end{thebibliography}
\bibliographystyle{amsplain}
\end{document}